\documentclass[12pt]{article}
% this offset makes it easier to view the dvi in xdvi under full zoom
%\addtolength{\voffset}{-1.5in} % comment it when making PDF
%\usepackage{srcltx}
\usepackage{amsthm, amssymb, bm}
\usepackage{soul, color}
\usepackage[]{amsmath}
\usepackage[]{amsfonts}
\usepackage[]{fancyhdr}
\usepackage[]{graphicx}
\usepackage[section]{algorithm}
\usepackage{algorithmic}
\graphicspath{{/EPSF/}{../figures/}{figures/}}

\setlength\textwidth{37.2pc}
\setlength\textheight{56pc}
\setlength\topmargin{-2cm}
\addtolength\oddsidemargin{-1.2cm}
\addtolength\evensidemargin{-1.2cm}
%\addtolength\textwidth{4.3cm}
%\addtolength\topmargin{-0.5cm}
%\addtolength\textheight{3.cm}

\newtheorem{theorem}{Theorem}[section]
\newtheorem{lemma}[theorem]{Lemma}
\newtheorem{proposition}[theorem]{Proposition}

\newtheorem{remark}[theorem]{Remark}

\def \Cm {\mathbb{C}}
\def \Imm {\mathbb{I}}

\def \Rm {\mathbb{R}}
\def \Sm {\mathbb{S}}

\def\A{\mathcal{A}}
\def\C{\mathcal{C}}

\def\O{\mathcal{O}}
\def\P{\mathcal{P}}

\newcommand{\eps}{\varepsilon}

\newcommand{\dint}{\displaystyle\int}
\newcommand{\pdr}[2]{\dfrac{\partial{#1}}{\partial{#2}}}

\renewcommand{\div}{\nabla\cdot} 
 
\newcommand{\curl}{\nabla\times}

\newcommand{\wtD}{ {\widetilde D} }

\newcommand{\wtH}{ {\widetilde H} }
\newcommand{\wtM}{ {\widetilde M} }

\newcommand{\wtQ}{ {\widetilde Q} }
\newcommand{\wtR}{ {\widetilde R} }
\newcommand{\wtS}{ {\widetilde S} }
\newcommand{\wtT}{ {\widetilde T} }
\newcommand{\wtV}{ {\widetilde V} }

\newcommand{\qandq}{\quad\text{ and }\quad}
\newcommand{\bfe}{ {\bf e}}
\newcommand{\bfg}{ {\bf g}}

\newcommand{\bfk}{ {\bf k}}

\newcommand{\bfR}{ {\bf R}}

\newcommand{\cout}[1]{}

\newcommand{\brho}{\boldsymbol \rho}

\newcommand{\fc}{{\mathfrak c}}

%%-- default page dimensions
%\voffset 0.0cm
\hoffset 0.0cm
\textwidth 16.0cm
\oddsidemargin 0.0cm
\evensidemargin 0.0cm

%%%%%%%%%%%%%%%%%%%%%%%%%%%%%%%%%%%%%%%%%%%%%%%%%%%%%%%%%%%%%%
% added by ian

\title{Inverse diffusion from knowledge of power densities}
\author{Guillaume Bal\thanks{Department of Applied Physics and 
 Applied Mathematics, Columbia University,  New York NY, 10027; gb2030@columbia.edu},\;\;Eric Bonnetier\thanks{Laboratoire Jean Kuntzmann, Universit\'e 
de Joseph Fourier \& CNRS, 38041 Grenoble Cedex 9, France, E-mail:  Eric.Bonnetier@imag.fr},\; Fran\c cois Monard\thanks{Department of Applied Physics and Applied Mathematics, Columbia University, 
        New York NY, 10027; fm2234@columbia.edu},\; Faouzi Triki\thanks{Laboratoire Jean Kuntzmann, Universit\'e 
de Joseph Fourier \& CNRS,
 38041 Grenoble Cedex 9, France,  
E-mail:  Faouzi.Triki@imag.fr}}

\begin{document}
\maketitle
\begin{abstract}
This paper concerns the reconstruction of a diffusion coefficient in an elliptic equation from knowledge of several power densities. The power density is the product of the diffusion coefficient with the square of the modulus of the gradient of the elliptic solution. The derivation of such internal functionals  comes from perturbing the medium of interest by acoustic  (plane) waves, which results in small changes in the diffusion coefficient. After appropriate asymptotic expansions and (Fourier) transformation, this allow us to construct the power density of the equation point-wise inside the domain. Such a setting finds applications in ultrasound modulated electrical impedance tomography and ultrasound modulated optical tomography. 

We show that the diffusion coefficient can be uniquely and stably reconstructed from knowledge of a sufficient large number of power densities. Explicit expressions for the reconstruction of the diffusion coefficient are also provided. Such results hold for a large class of boundary conditions for the elliptic equation in the two-dimensional setting. In three dimensions, the results are proved for a more restrictive class of boundary conditions constructed by means of complex geometrical optics solutions.
\end{abstract}

%\tableofcontents

%
%%
%%%%%%%%%%%%%%
\section{Introduction}
\label{sec:intro}

Optical tomography (OT) and electrical impedance tomography (EIT) are medical imaging techniques that utilize the large contrast between the optical and electrical response of certain unhealthy tissues and that of healthy tissues. However, because OT and EIT are modeled by diffusion operators that are highly smoothing, the reconstructions are typically very low resolution \cite{AS-IP-10,B-IP-09,U-IP-09}. Several recent imaging techniques aim to combine the high contrast in OT and EIT with a high resolution modality, for instance based on magnetic resonances (MREIT)  \cite{KWYS-IEEE-02,NTT-IP-07,NTT-IP-09} or, for what is of interest for us here, ultrasound. There are several ways of combining EIT and OT with ultrasound. One way is to use the photo-acoustic or the electro-acoustic effects; see, e.g., \cite{BU-IP-10,CAB-JOSA-09,PS-IP-07,W-JDM-04}. In this paper, we consider another way based on ultrasound modulation. 

In the setting of EIT, ultrasound modulated electrical impedance tomography (UMEIT) is also called acousto-electric tomography; see \cite{ABCTF-SIAP-08,B-UMEIT-11,CFGK-SJIS-09,GS-SIAP-09,KK-AET-11,ZW-SPIE-04}. In the setting of optical tomography, ultrasound modulated tomography (UMOT) is also called acousto-optic tomography. A model has been derived in \cite{BS-PRL-10} in the so-called incoherent regime of wave propagation, while the coherent regime is addressed in, e.g., \cite{AFRBG-OL-05,KLZZ-JOSA-97,W-JDM-04}.

\medskip

The inverse problem of UMEIT may be described as follows. The electric potential $u$ solves a diffusion equation (see \eqref{eq:ellipticDsigma} below) with unknown conductivity $\sigma$. The ultrasound modulation whose modeling is recalled in section \eqref{sec:derivation} provides access to the power density $H(x)=\sigma(x)|\nabla u|^2(x)$ for all $x$ inside the domain of interest. Reconstruction of $\sigma$ from this one internal functional $H(x)$ is typically not feasible unless additional constraints are imposed; see e.g., \cite{B-UMEIT-11}. In this paper, we assume that several such power densities corresponding to, e.g., different boundary conditions in \eqref{eq:ellipticDsigma} are known. By polarization or direct measurements as shown in section \ref{sec:derivation}, we can have access to $H_{ij}(x)=\sigma(x)\nabla u_i\cdot\nabla u_j(x)$ for $u_i$ and $u_j$ solutions to the same elliptic equation with different boundary conditions. The case of the spatial dimension $n=2$ was treated in \cite{CFGK-SJIS-09}. This paper aims to generalize these results to the case $n=3$ (which also generalize to the case $n\geq 4$, see \cite{MB}, although we shall not present the details here) and to show that UMEIT is a stable inverse problem, in the sense that in an appropriate norm, errors on the measurement of the power densities $H_{ij}$ result in errors on the reconstruction of $\sigma$ that are of the same order. This should be contrasted with the case of EIT and OT, where the error on $\sigma$ is roughly proportional to the logarithm of (and hence much larger than) the error on the available (Cauchy) data. Moreover, our derivation is constructive in the sense that it produces an explicit procedure to reconstruct $\sigma$ from knowledge of a sufficiently large number of internal measurements of the form $H_{ij}$.

\medskip

In dimension $n=2$, one set of measurements for $1\leq i,j\leq 2$ is sufficient to uniquely characterize $\sigma$. In dimension $n=3$, sets of measurements $1\leq i,j\leq 3$ are sufficient locally but not necessarily globally. We show the existence of sets of measurements $1\leq i,j\leq 4$, which uniquely (and stably) characterize $\sigma$. This construction is based on showing that the determinant of a matrix whose columns are the gradients of three of the four solutions to the elliptic equation is always positive at any point $x$ of the domain. The three chosen solutions depend on the point $x$. Such constructions are based on the use of complex geometric optics solutions, which are a convenient tool to show that solutions to elliptic equations satisfy desired qualitative properties \cite{B-UMEIT-11,BRUZ-IP-11,BU-IP-10,T-IP-10}.

\medskip

The rest of the paper is structured as follows. The derivation of the power density measurements from ultrasound modulations of media of interest is recalled in section \ref{sec:derivation}. The main results obtained in this paper are presented in section \ref{sec:main}. The proofs of the main theorems are given in the following two sections. Section \ref{sec:local} provides the proof of uniqueness, stability, and reconstruction procedures on a domain $\Omega$ on which the determinant of a matrix composed of $n$ gradients of solutions is shown to be bounded below by a positive constant. How such results can be patched together to provide global reconstruction results is given in section \ref{sec:global}. That determinants can be shown to have a prescribed sign is based on the use of complex geometric optics (CGO) solutions that are presented in section \ref{sec:global}.

%
%%
%%%%%%%%%%%%%%%%%%%
\section{Derivation of internal functionals}
\label{sec:derivation}

We consider the following elliptic equation
\begin{equation}
  \label{eq:ellipticDsigma}
  -\nabla\cdot \sigma(x)\nabla u = 0\quad \mbox{ in } X, \qquad u=g\quad \mbox{ on } \partial X.
\end{equation}
Here, $\sigma$ is the unknown diffusion coefficient, which we assume in this paper is a real-valued, scalar, function defined on a simply connected domain $X\subset\Rm^n$ for $n=2$ or $n=3$. We assume that $\sigma$ is bounded above and below by positive constants so that the above equation admits a unique solution. We also assume that $\sigma$ is sufficiently smooth so that the solution to the above equation is continuously differentiable on $\bar X$, the closure of $X$ \cite{gt1}. We denote by $\partial X$ the boundary of $X$ and by $g(x)$ the imposed (sufficiently smooth) Dirichlet boundary conditions. %All the results derived in this paper also hold if the latter conditions are replaced by Neumann or Robin boundary conditions.

The coefficient $\sigma(x)$ may model the electrical conductivity in the setting of electrical impedance tomography (EIT) or a diffusion coefficient of particles (photons) in the setting of optical tomography (OT). Both EIT and OT are modalities with high contrast, in the sense that $\sigma(x)$ takes different values in different tissues and allows one to discriminate between healthy and non-healthy tissues. In OT, high contrasts are mostly observed in the absorption coefficient, which is not modeled here \cite{BS-PRL-10}.

A methodology to couple high contrast with high resolution consists of perturbing the diffusion coefficient acoustically. Let an acoustic signal propagate through the domain. In this presentation, we assume that the sound speed is constant and that the acoustic signal is a plane wave of the form $p\cos(k\cdot x + \varphi)$ where $p$ is the amplitude of the acoustic signal, $k$ its wave-number  and $\varphi$ an additional phase. The acoustic signal modifies the properties of the diffusion equation. We assume that such an effect is small and that the coefficient in \eqref{eq:ellipticDsigma} is modified as 
\begin{equation}
  \label{eq:modifcoefs}
  \sigma_\eps(x) = \sigma(x) (1+\zeta \eps \fc ),
\end{equation}
where we have defined $\fc=\fc(x)=\cos(k\cdot x + \varphi)$ and where $\eps=p\Gamma$ is the product of the acoustic amplitude $p\in\Rm$ and a measure $\Gamma>0$ of the coupling between the acoustic signal and the modulations of the constitutive parameter in \eqref{eq:ellipticDsigma}, see \cite{KK-AET-11} and references therein. We assume that $\eps\ll1$ so that the influence of the acoustic signal on $\sigma_\eps$ admits an asymptotic expansion that we truncated at the second order as displayed in \eqref{eq:modifcoefs}. The size of the terms in the expansion are physically characterized by $\zeta$ and depend on the specific application.

Let $u$ and $v$ be solutions of \eqref{eq:ellipticDsigma} with fixed boundary conditions $g$ and $h$, respectively. When the acoustic field is turned on, the coefficients are modified as described in \eqref{eq:modifcoefs} and we denote by $u_\eps$ and $v_\eps$ the corresponding solutions. Note that $u_{-\eps}$ is the solution obtained by changing the sign of $p$ or equivalently by replacing $\varphi$ by $\varphi+\pi$.

By the standard continuity of the solution to \eqref{eq:ellipticDsigma} with respect to changes in the coefficients and regular perturbation arguments, we find that $u_\eps=u_0+\eps u_{1} + O(\eps^2)$.
% and that $u_\eps u_{-\eps}-u_0^2=O(\eps^2)$
Let us multiply the equation for $u_\eps$ by $v_{-\eps}$ and the equation for $v_{-\eps}$ by $u_\eps$, subtract the resulting equalities, and use standard integrations by parts. We obtain that 
\begin{equation}
  \label{eq:Green}
  \dint_X (\sigma_\eps-\sigma_{-\eps})\nabla u_\eps\cdot\nabla v_{-\eps} dx = \dint_{\partial X}  \sigma_{-\eps} \pdr{v_{-\eps}}{\nu} u_\eps -\sigma_\eps \pdr{u_\eps}{\nu} v_{-\eps}  ds(x).
\end{equation}
Here, $ds(x)$ is the standard surface measure on $\partial X$.
We assume that $\sigma_\eps \partial_\nu u_\eps$ and $\sigma_\eps \partial_\nu v_\eps$ are measured on $\partial X$, at least on the support of $v_{\eps}=h$ and $u_\eps=g$, respectively, for all values $\eps$ of interest. Note that the above equation holds if the Dirichlet boundary conditions are replaced by Neumann boundary conditions. Let us define
\begin{equation}
  \label{eq:measbdry}
  J_\eps :=  \dfrac12\dint_{\partial X}  \sigma_{-\eps} \pdr{v_{-\eps}}{\nu} u_\eps -\sigma_\eps \pdr{u_\eps}{\nu} v_{-\eps}  ds(x)  \,\,=\,\, \eps J_1 + \eps^2 J_2 + O(\eps^3).
\end{equation}
We assume that the real valued functions $J_m=J_m(k,\varphi)$ are known (measured functions). Notice that such knowledge is based on the physical boundary measurement of the Cauchy data of the form $(u_\eps,\sigma_\eps \partial_\nu u_\eps)$ and  $(v_\eps,\sigma_\eps \partial_\nu v_\eps)$ on $\partial X$. 

Equating like powers of $\eps$, we find that at leading order
\begin{equation}
  \label{eq:leadingorder}
  \dint_X \big[\zeta \sigma(x) \nabla u_0\cdot\nabla v_0(x) \big] \cos(k\cdot x+\varphi) dx = J_1(k,\varphi).
\end{equation}
This may be acquired for all $k\in\Rm^n$ and $\varphi=0,\frac\pi2$, and hence provides the Fourier transform of
\begin{equation}
  \label{eq:H0Lap}
  H[u_0,v_0](x) = \zeta \sigma(x) \nabla u_0\cdot\nabla v_0(x).
\end{equation}

Note that when $v_\eps=u_\eps$, then we find from the expression in \eqref{eq:Green}  that $J_2=0$ in \eqref{eq:measbdry} so that the expression for $J_1$ may be obtained from available measurements in \eqref{eq:measbdry} with an accuracy of order $O(\eps^2)$. Note also that 
\begin{displaymath} 
  H[u_0,v_0](x) = \dfrac14\big(H[u_0+v_0,u_0+v_0]-H[u_0-v_0,u_0-v_0]\big)
 \end{displaymath}
by polarization. In other words, the limiting measurements (for small $\eps$) in \eqref{eq:H0Lap} may also be obtained by considering expressions of the form $\eqref{eq:Green}$ with $u_\eps=v_\eps$.

In the setting of optical tomography, the coefficient $\sigma_\eps$ in \eqref{eq:modifcoefs} takes the form 
\begin{displaymath} 
   \sigma_\eps(x) = \dfrac{\tilde \sigma_\eps}{c_\eps^{n-1}}(x),
 \end{displaymath}
where $\tilde \sigma_\eps$ is the diffusion coefficient, $c_\eps$ is the light speed, and $n$ is spatial dimension. When the pressure field is turned on, the location of the scatterers is modified by compression and dilation. Since the diffusion coefficient is inversely proportional to the scattering coefficient, we find that 
\begin{displaymath} 
   %\tilde\sigma_\eps(x) = \tilde\sigma(x) \big( 1+\eps \fc(x)\big),\qquad
 \dfrac1{\sigma_\eps(x)}=\dfrac1{\sigma(x)}\big(1+\eps \fc(x)\big).
 \end{displaymath}
Moreover, the pressure field changes the index of refraction (the speed) of light as follows
\begin{displaymath} 
    c_\eps(x) = c(x)(1+\gamma\eps \fc(x)),
\end{displaymath}
where $\gamma$ is a constant (roughly equal to $\frac13$ for water). This shows that
\begin{equation}
  \label{eq:zetaeta}
  \zeta = - (1+(d-1)\gamma).
\end{equation}
In the setting of electrical impedance tomography, we simply assume that $\zeta$ models the coupling between the acoustic signal and the change in the electrical conductivity of the underlying material.
The value of $\zeta$ thus depends on the application.  

The objective of ultrasound modulated optical tomography (UMOT) and ultrasound modulated electrical impedance tomography (UMEIT) is to reconstruct the coefficient $\sigma(x)$ from measurements of the form \eqref{eq:H0Lap}, i.e., since we assume that $\zeta$ is known, from measurements of the form $H(x) =\sigma(x) \nabla u(x) \cdot \nabla v(x)$, where $u$ and $v$ are two solutions of the unperturbed equation with (possibly) different boundary conditions.

In the setting where $v_0=u_0$, measurements are of the form $H_{00}(x) = \sigma(x)|\nabla u_0|^2$. Plugging the latter expression into the elliptic equation yields the nonlinear equation
\begin{equation}
  \label{eq:0Lap}
  \nabla\cdot \dfrac{H_{00}(x)}{|\nabla u_0|^2} \nabla u_0 =0 \quad\mbox{ in } X, \qquad u_0=g \quad\mbox{ on } \partial X.
\end{equation}
We thus observe that the reconstruction of $\sigma$ may be recast as solving the above nonlinear partial differential equation. It turns out that the above equation is not directly amenable to analysis. %modified versions of the above inverse problem will be considered in section \%\%\cout{\ref{sec:diff}}. 
However, the above expression makes clear the connection between inverse problems with internal information and nonlinear partial differential equations.

The methodology to obtain \eqref{eq:H0Lap} follows the presentation in \cite{BS-PRL-10} and is very similar in spirit to the derivation obtained in \cite{KK-AET-11}. An alternative method based on physical focusing in time of acoustic signals has been presented in \cite{ABCTF-SIAP-08}.

Assuming the validity of the above derivation, the mathematical problem of interest in this paper is as follows. We want to reconstruct $\sigma(x)$ from knowledge of the interior functionals
\begin{align}
    H_{ij} (x) = \sigma(x) \nabla u_i (x) \cdot\nabla u_j (x), \qquad 1\le i,j\le m,
    \label{eq:Hij}
\end{align} 
where $u_j$ is the solution to the equation 
\begin{align}
    \begin{split}
	\nabla\cdot(\sigma\nabla u_i) &= 0 \quad X, \\
	u_i &= g_i \quad \partial X,	\qquad 1\leq i\leq m,
    \end{split}
    \label{eq:conductivity}
\end{align}
for appropriate choices of the boundary conditions $g_i$ on $\partial X$. Obviously, we would like $m$ to be as small as possible and ideally equal to $1$. Our results propose explicit reconstructions for $m=n$ in dimension $n=2$ and $m=n$ or $m=n+1$ depending on additional parameters in dimension $n=3$. The case of the dimension $n\geq4$ can be handled in a similar fashion although we shall not present the mathematically more involved and practically less interesting details in this paper.

%
%
%
%
%%%%%%%%%%%%%%%%%%%%%%%%%%
\section{Statement of the main results} \label{sec:main}

The main strategy we use to reconstruct $\sigma$ from knowledge of $H=\{H_{ij}\}_{ij}$ is first to write equations for the quantities $S_i := \sqrt{\sigma} \nabla u_i$ that are independent of $\sigma$ and then to show that $\sigma$ is uniquely determined when $S_i$ is known. This was implemented in the two dimensional setting in \cite{CFGK-SJIS-09} and the main result of this paper is the extension to the three dimensional case. The extension to arbitrary dimensions is also feasible although we shall restrict ourselves to the cases $n=2,3$ for concreteness.

We define:
\begin{align}
   S_i := \sqrt\sigma \nabla u_i, \quad 1\leq i\leq m ,\qquad  F:= \nabla(\log\sqrt{\sigma}) = \frac{1}{2} \nabla \log \sigma.
    \label{eq:defF}
\end{align}
Using the equations \eqref{eq:conductivity}, the definitions \eqref{eq:defF} and the fact that $\sigma^{-\frac12}S_i$ is a gradient, we obtain:
\begin{align}
    \nabla \cdot (\sqrt{\sigma} S_j ) = 0 \quad&\Leftrightarrow\quad \nabla\cdot S_j + F\cdot S_j = 0, \label{eq:divSj} \\[2mm]
    n=2\,:\quad  \left[\nabla,\frac{1}{\sqrt{\sigma}} S_j \right] = 0 \quad&\Leftrightarrow\quad [\nabla, S_j] - [F,S_j] =0, \label{eq:curlSj2d}  \\[2mm]
    n=3\, :\quad \nabla\times \left(\frac{1}{\sqrt{\sigma}} S_j \right) = 0 \quad&\Leftrightarrow\quad \nabla\times S_j - F\times S_j = 0, \label{eq:curlSj3d}
\end{align}
where we have defined for $n=2$ the product $[A,B] := A_x B_y - A_y B_x$ and $[\nabla, A] := \partial_x A_y - \partial_y A_x$ for smooth vector fields $A$ and $B$, while for $n=3$, $\times$ is the standard cross product.

We now wish to eliminate $F$ from such equations and get a closed form equation for the vectors $S_i$ with sources that only involve the known matrix $H$. Such an elimination requires that we find $n$ vectors $S_i$ that form a basis of $\Rm^n$ for $n=2,3$. In dimension $n=2$, it is not difficult to find such a basis for all $x\in X$. In dimension $n\geq3$, we are able to construct such bases on subset of $\Rm^n$ that cover $X$ using triplets of vectors that depend on the subset. We thus consider the above equations for $S_i$ and $F$ over an open subset $\Omega\subset X$, over which we make the assumption that 
\begin{align}\label{eq:positiv}
    \inf_{x\in\Omega} \det (S_1(x),\dots,S_n(x)) = c_0 >0,
\end{align}
where the $n$ vectors $S_j$ for $1\leq j\leq n$ are chosen among the $m$ vectors considered in \eqref{eq:Hij}. Since the determinants are central in our derivations, we define
\begin{equation}
  \label{eq:dD}
  d(x) := \det (S_1(x),S_2(x)), \,\,n=2\quad \mbox{ and } \quad
  D(x) := \det (S_1(x),S_2(x),S_3(x)),\,\,n=3,
\end{equation}
for all $x\in \Omega$. Note that $\det (S_1(x),\dots,S_n(x))=({\rm det} H )^{\frac12}(x)\geq c_0$ on $\Omega$.
%%%%%-------------------------------------------------------------------------------------------------------
\begin{remark}  \label{rem:Hmun} Since $H$ is invertible and is a symmetric non-negative matrix, it satisfies the following inequalities 
 \begin{equation*}
     \|H^{-1}\|_\infty^{-n} \le \det H \le \prod_{i=1}^n H_{ii} \le \|H\|_\infty^n,
\end{equation*}
where the second inequality is a general property of gramian matrices (i.e. matrices of dotproducts of a given family of vectors). Indeed, for $n=2$, we have $\det H = H_{11}H_{22}- H_{12}^2 \le H_{11} H_{22}$, and for $n=3$, we may write
\begin{align*}
    \det H = (\det(S_1,S_2,S_3))^2 = \| S_1\cdot (S_2\times S_3) \|^2 \le \|S_1\|^2 \|S_2\times S_3\|^2 \le \|S_1\|^2 \|S_2\|^2 \|S_3\|^2,
\end{align*}
after successively using the Cauchy-Schwarz inequality and a property of the cross-product. Then, since $H = S^T S \in C^0(\overline\Omega)$ (by regularity of the solutions and assuming $\sigma$ continuous), the assumption~\eqref{eq:positiv} is equivalent to $H$ being invertible over $\overline \Omega$.  In this case $H^{-1}$  is uniformly bounded over
$\Omega$,  that is $\|H^{-1}\|_\infty < +\infty$, and so  
 \begin{equation}
  \label{eq:Hmoins1}
 0< \|H^{-1}\|_\infty^{-n} \le c_0^2.
\end{equation}
\end{remark}
%%%%--------------------------------------------------------------------------------------------------------------
In two dimensions, the constraint \eqref{eq:positiv} is satisfied over the whole domain $\Omega=X$ for a large class of boundary conditions $g_i$ and we then choose $m=n=2$ as shown in \cite[Theorem 4]{AN-ARMA-01}. For instance, we may choose $g_j(x_1,x_2)=x_j$ on $\partial X$ and we are then guaranteed that $(x_1,x_2)\mapsto (u_1,u_2)$ is a  diffeomorphism from $X$ to its image so that the determinant is clearly signed, and by continuity (assuming that $\sigma$ is sufficiently smooth so that gradients are continuous functions, which we also assume for the rest of the paper \cite{gt1}), bounded from below by a positive constant.

In three dimensions, there is no known guarantee that the determinant of three solutions with prescribed boundary conditions remains positive throughout the domain $X$ independent of the conductivity $\sigma$. For boundary conditions of the form $g_j(x)=x_j$ on $\partial X$ for $1\leq j\leq 3$, it is known that the determinant can change signs for some conductivities \cite{BMN-ARMA-04}. 

Yet, \eqref{eq:positiv} is an important constraint in the derivation of our results. By means of specific complex geometric optics solutions, we are able to construct boundary conditions $g_i$ such that \eqref{eq:positiv} is valid on subsets of the domain $X$. More precisely, we shall construct $g_i$ for $1\leq i\leq 4$ so that  $\det (S_1(x),S_2(x),S_3(x))\geq c_0>0$ on $\cup_{k=1}^M \Omega_{2k}$ and $\det(S_1(x),S_2(x),S_4(x))\geq c_0>0$ on $\cup_{k=1}^M \Omega_{2k-1}$ with all domains $\Omega_k$ simply connected, open and such that $X\subset \cup_{k=1}^{2M}\Omega_k$. In other words, we can choose $m=4$ boundary conditions such that $X$ is decomposed into a superposition of overlapping simply connected subsets where the determinant of three out of the four vectors is positive. Generically, we denote by $\Omega$ any of the subdomains $\Omega_k$. We shall see that $\sigma$ can be reconstructed on all of $\Omega$ provided that $\sigma$ and all $S_i$ are known at one point $x_0\in\Omega$. After patching local reconstructions together, this provides a (unique) reconstruction in the whole domain $X$.

On each of the subdomains $\Omega\equiv\Omega_k$, the available information is $H_{ij}(x) = S_i(x)\cdot S_j(x)$ for $1\le i\le j\le n$, abbreviated by a function $H:\Omega\to S_n(\Rm)$ ($S_n(\Rm)$ denotes the $n\times n$ real-valued symmetric matrices). Although our goal is ultimately to recover only the conductivity $\sigma$, our approach requires the reconstruction of the vector fields $S_i,\ 1\le i\le m$ and $F$. Since the data $H(x)$ give us $S(x)$ (the matrix with columns given by the vectors $S_i$) up to an $SO_n(\Rm)$-valued function $R(x)$ (i.e., $R(x)$ is a rotation matrix), the unknowns are now the functions $R$ and $F$, which are of dimension $\frac{n(n-1)}{2}$ and $n$, respectively. 
As will be given in more detail in section \ref{sec:local}, the local reconstruction of $R$ and $F$ proceeds as follows: one first derives divergence and curl equations for the $R_j$'s, column vectors of the function $R:\Omega\to SO_n(\Rm)$, with right-hand-sides involving $F$, the $R_j$'s and the data. Using structural properties of $SO_n(\Rm)$ satisfied at every $x\in\Omega$, one is then able to derive an equation for $F$ of the form 
\begin{align}
    F = \frac{1}{n} \left( \frac{1}{2} \nabla\log\det H + \sum_{1\le i,j\le n} ((V_{ij}+V_{ji})\cdot R_i) R_j \right), \quad n=2,3,
    \label{eq:datatoF}
\end{align}
where the vector fields $V_{ij}$ depend only on the data. Plugging this equation back into the divergence/curl system closes the system for the $R_j$'s. From this closed sytem, one is then able to derive gradient-type equations for the $R_j$'s, where the right-hand sides either depend only on the data (case $n=2$), or also depend polynomially on the $R_j$'s (case $n=3$). In either case, these gradient equations can be integrated along segments of the form $[x_0,x]$ for $x,x_0\in\Omega$, parameterized by the curve
\begin{align}
    \gamma_{x_0,x}:[0,1]\ni t\mapsto\gamma_{x_0,x}(t) = (1-t)x_0 + tx \in\Omega,
    \label{eq:segment}
\end{align}
in order to reconstruct locally the $R_j$'s at $x$ from knowledge of their value at $x_0$. Once the $R_j$'s (and therefore the function $R$) are locally reconstructed around $x_0$, one can reconstruct $\sigma(x)$ around $x_0$ from the knowledge of $\sigma(x_0)$ and integrating \eqref{eq:datatoF} along segments $[x_0,x]$.

As we mentioned earlier,  we can set $\Omega = X$ when $n=2$ by choosing illuminations $\bfg=(g_1,g_2)$ such that \eqref{eq:positiv} is satisfied throughout $X$.  Then the above reconstruction procedure yields unique and stable reconstructions, as described in the following theorem. 
\begin{theorem}[2D global uniqueness and stability]\label{thm:stab2d}
    Let $(H, \wtH)$ be two data sets with coefficients in $W^{1,\infty}(X)$ corresponding to the same illumination $\bfg=(g_1,g_2)$ and with determinants satisfying the condition
    \begin{align}
	\inf_{x\in X}(d, \tilde d) \ge c_0>0, \qquad \mbox{ with } \quad d^2(x)=\det H(x),\quad \tilde d^2(x)=\det\tilde H(x).
	\label{eq:condstab2d}
    \end{align}
    Let  $\sigma$ and $\tilde \sigma$ be the corresponding conductivities and assume that $\sigma(x_0)=\tilde\sigma(x_0)$ for some $x_0\in \overline{X}$. Then, we have the following stability estimate
    \begin{align}
	\|\log\sigma-\log\tilde\sigma\|_{W^{1,\infty}(X)} \le C \|H-\wtH\|_{W^{1,\infty}(X)}.
	\label{eq:stab2d}
    \end{align}
    This result also ensures uniqueness, since \eqref{eq:stab2d} implies $H=\wtH \implies \sigma=\tilde\sigma$.
\end{theorem}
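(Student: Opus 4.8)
The plan is to convert the constructive scheme summarized after \eqref{eq:datatoF} into a quantitative Lipschitz estimate, tracking how each step depends on the data in the $W^{1,\infty}$ norm. The starting observation is that, by \eqref{eq:condstab2d} and Remark \ref{rem:Hmun}, both $H$ and $\wtH$ are uniformly positive definite on $X$ with $\|H^{-1}\|_\infty,\|\wtH^{-1}\|_\infty$ controlled by $c_0$. Hence the symmetric positive square root $H^{1/2}$, the inverse, $\log\det H$, and the auxiliary fields $V_{ij}$ of \eqref{eq:datatoF} are all \emph{Lipschitz} maps of the data: I expect estimates of the form $\|H^{1/2}-\wtH^{1/2}\|_{W^{1,\infty}}+\|V_{ij}-\widetilde V_{ij}\|_{L^\infty}\le C\|H-\wtH\|_{W^{1,\infty}}$, with $C=C(c_0,\|H\|_{W^{1,\infty}},\|\wtH\|_{W^{1,\infty}})$. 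Writing $S=R\,H^{1/2}$ with $R$ valued in $SO_2(\Rm)$, the only unknowns are the rotation field $R$ (a single angle $\theta$ in two dimensions) and $F=\tfrac12\nabla\log\sigma$.

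First I would estimate $R-\widetilde R$. The procedure produces a first-order gradient equation for the columns of $R$ whose right-hand side, in dimension two, is an explicit expression in the data fields above and does not itself involve $R$. Integrating this along the segment $\gamma_{x_0,x}$ of \eqref{eq:segment} and subtracting the identity for $\widetilde R$, the difference $R(x)-\widetilde R(x)$ equals the base-point discrepancy $R(x_0)-\widetilde R(x_0)$ plus the line integral of the difference of the two right-hand sides; the latter is bounded by $\diam\,\|H-\wtH\|_{W^{1,\infty}}$ through the Lipschitz estimates. (In three dimensions the right-hand side depends polynomially on $R$, so a Gronwall argument along the segment would be needed instead.) This yields $\|R-\widetilde R\|_{L^\infty(X)}\le C\big(\|H-\wtH\|_{W^{1,\infty}(X)}+|R(x_0)-\widetilde R(x_0)|\big)$. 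Feeding $R$ into \eqref{eq:datatoF} and again using the Lipschitz dependence of the data fields then gives $\|F-\widetilde F\|_{L^\infty(X)}\le C\big(\|H-\wtH\|_{W^{1,\infty}(X)}+\|R-\widetilde R\|_{L^\infty(X)}\big)$.

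To conclude, I would integrate $F=\tfrac12\nabla\log\sigma$ along $\gamma_{x_0,x}$. Since $\log\sigma(x)=\log\sigma(x_0)+2\int_0^1 F(\gamma_{x_0,x}(t))\cdot(x-x_0)\,dt$ and $\sigma(x_0)=\widetilde\sigma(x_0)$ by hypothesis, the base-point terms for $\sigma$ cancel, giving $\|\log\sigma-\log\widetilde\sigma\|_{L^\infty}\le\diam\,\|F-\widetilde F\|_{L^\infty}$, while $\|\nabla(\log\sigma-\log\widetilde\sigma)\|_{L^\infty}=2\|F-\widetilde F\|_{L^\infty}$. Combining the last three displays produces \eqref{eq:stab2d}, and the special case $H=\wtH$ gives uniqueness.

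The step I expect to be the true obstacle is the base-point discrepancy $R(x_0)-\widetilde R(x_0)$: the data determine $R$ only up to the rotational gauge, yet the hypothesis matches only $\sigma(x_0)$, not the vectors $S_i(x_0)$. The resolution must come from the structural constraints that $R$ take values in $SO_2(\Rm)$ for \emph{every} $x$ and that the reconstructed $F$ be a genuine gradient (curl-free); these integrability conditions should pin the admissible value of $R$ at $x_0$ and, crucially, force it to depend Lipschitz-continuously on the data, so that $|R(x_0)-\widetilde R(x_0)|\le C\|H-\wtH\|_{W^{1,\infty}}$ and the estimate closes. Showing that this gauge-fixing is well-defined and stable — rather than merely a local ODE-uniqueness statement — is where I would concentrate the effort, and it is exactly here that \eqref{eq:positiv}/\eqref{eq:condstab2d} is indispensable, since it keeps every denominator appearing in the reconstruction bounded away from zero.
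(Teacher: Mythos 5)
Your overall skeleton is the paper's own: Lipschitz control of the Gram--Schmidt transition matrix and of the fields $V_{ij}$ in terms of $\|H-\wtH\|_{W^{1,\infty}}$ (the paper's Lemma~\ref{lem:GS} and \eqref{eq:stabvij}), the observation that in two dimensions the rotation reduces to an angle $\theta$ satisfying a gradient equation \eqref{eq:gradtheta} whose right-hand side depends \emph{only} on the data, integration along segments $\gamma_{x_0,x}$, and finally integration of $F$ via \eqref{eq:datatoF}. All of that part of your argument is sound and matches section~\ref{sec:gradtheta}. The genuine gap is exactly the step you flagged and deferred: the base-point discrepancy $|\theta(x_0)-\tilde\theta(x_0)|$. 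Your proposed resolution --- that the $SO_2(\Rm)$ structure together with the curl-free (integrability) condition on the reconstructed $F$ should pin the gauge at $x_0$ stably --- is not the paper's argument and is unlikely to close as stated: integrability of the reconstructed $F$ is a compatibility condition that holds only when the data lie in the range of the measurement operator, and the paper explicitly notes (end of section~\ref{sec:main}) that for noisy data this may fail and the reconstruction may become path-dependent. So for an arbitrary second data set $\wtH$ you cannot extract a Lipschitz bound on the gauge from integrability alone.

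What actually closes the argument is the hypothesis you never used: both data sets correspond to the \emph{same} illumination $\bfg=(g_1,g_2)$. The paper anchors the angle at a boundary point rather than at $x_0$: if $x_m\in\partial X$ is the global minimum of $g_1$, the maximum principle forces $u_1$ to attain its minimum over $\overline X$ at $x_m$, whence $S_1(x_m)/|S_1(x_m)| = -\nu(x_m)$, and the same identity holds for $\wtS_1$ because $\tilde g_1=g_1$. Thus $R_1(\theta(x_m)) = \wtR_1(\tilde\theta(x_m)) = -\nu(x_m)$, so $\theta(x_m)=\tilde\theta(x_m)$ \emph{exactly}, and the base-point term vanishes identically; one then integrates \eqref{eq:gradtheta} from $x_m$ (not from $x_0$, which only serves to normalize $\sigma$) to get $\|\theta-\tilde\theta\|_{W^{1,\infty}(X)}\le C\|H-\wtH\|_{W^{1,\infty}(X)}$, and the rest of your argument goes through verbatim. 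This is also why the theorem, unlike its 3D counterpart Theorem~\ref{thm:stab3d}, does not need the values $S_i(x_0)$ to be prescribed: the data plus the maximum principle supply the gauge at the boundary for free.
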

Note that theorem \ref{thm:stab2d} does not require the prescription of the $S_i$'s at the point $x_0$ because the data $H$ together with the maximum principle allow us to access their values at appropriate boundary points, as will be seen in section \ref{sec:gradtheta}, paragraph ``reconstruction procedure''.

In the three dimensional case, as mentioned earlier, we do not know of any illumination $\bfg$ that guarantees the constraint \eqref{eq:positiv} throughout $X$. We then work with $m\ge 3$ solutions of \eqref{eq:conductivity} and assume that there exists an open covering $\O = \{\Omega_k\}_{1\le k\le N}$ ($X\subset\cup_{k=1}^N \Omega_k$), a constant $c_0>0$ and a function $\tau:[1,N]\ni i\mapsto \tau (i) = (\tau(i)_1, \tau(i)_2, \tau(i)_3) \in[1,m]^3$, such that
\begin{align}
    \inf_{x\in\Omega_i} \det(S_{\tau(i)_1}(x), S_{\tau(i)_2}(x), S_{\tau(i)_3}(x)) \ge c_0, \quad 1\le i\le N.
    \label{eq:condji}
\end{align}
The next lemma, whose proof is given in the beginning of section \ref{sec:global}, gives an example of such a setting based on the use of complex geometric optics solutions, if one assumes some regularity on $\sigma$. 
\begin{lemma} \label{lem:setup}
    Let $n=3$ and $\sigma\in H^{\frac{9}{2}+\varepsilon}(X)$ for some $\varepsilon>0$, be bounded from below by a positive constant. Then there exists a non-empty open set $G\subset (H^{\frac{1}{2}}(\partial X))^4$ of quadruples of illuminations such that for any $\bfg = (g_1,g_2,g_3,g_4) \in G$, there exists an open cover of $X$ of the form $\{ \Omega_{2i-1}, \Omega_{2i} \}_{1\le i\le N}$ and a constant $c_0 >0$ such that 
    \begin{align}
	\inf_{x\in\Omega_{2i-1}} \det (S_1, S_2, \tilde \epsilon_i S_4) \ge c_0 \qandq \inf_{x\in\Omega_{2i}} \det (S_1, S_2, \epsilon_i S_3) \ge c_0, \quad 1\le i\le N,
	\label{eq:condji_lemma}
    \end{align}
    for $\epsilon_i$ and $\tilde\epsilon_i$ equal to $\pm1$.
\end{lemma}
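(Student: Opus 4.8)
The plan is to construct the four illuminations as the real and imaginary parts of two complex geometric optics (CGO) solutions, and to read off the determinant inequalities \eqref{eq:condji_lemma} from the explicit leading-order behaviour of their gradients. First I would reduce \eqref{eq:conductivity} to a Schr\"odinger equation via the Liouville substitution $u=\sigma^{-1/2}v$, which yields $(-\Delta+q)v=0$ with $q=\Delta\sqrt\sigma/\sqrt\sigma$. The hypothesis $\sigma\in H^{9/2+\eps}(X)$ is exactly what makes this potential regular enough that the standard CGO construction produces solutions $v_\rho=e^{\rho\cdot x}(1+\psi_\rho)$, with $\rho\in\Cm^3$ satisfying $\rho\cdot\rho=0$, whose correctors obey $\|\psi_\rho\|_{C^1(\overline X)}\to0$ as $|\rho|\to\infty$ (Sobolev embedding in dimension three forces control of several derivatives of $q$, hence of several derivatives of $\sqrt\sigma$). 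Writing $\rho=s\zeta$ with $\zeta\cdot\zeta=0$ and $|\zeta|=1$, the relevant field $S=\sqrt\sigma\,\nabla u=\nabla v-Fv$ then takes the form $S=e^{\rho\cdot x}(s\zeta+r_\rho)$ with $\|r_\rho/s\|_{C^0(\overline X)}\to0$, so that after factoring the positive real exponential $e^{s\,\Re\zeta\cdot x}$ the normalized gradient converges uniformly to $\zeta$.

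Next I would select two null frequencies: $\zeta_{12}$ spanning the plane $P=\mathrm{span}(e_1,e_2)$, and $\zeta_{34}$ spanning a transverse plane, say with $\Re\zeta_{34}\parallel e_3$ and $\Im\zeta_{34}\parallel e_1$. Let $u_1,u_2$ (resp. $u_3,u_4$) be the real and imaginary parts of the CGO solution with frequency $s\zeta_{12}$ (resp. $s\zeta_{34}$); their traces on $\partial X$ define $g_1,\dots,g_4$. Computing leading terms, $S_1,S_2$ lie in $P$ and their in-plane determinant equals $s^2e^{2s\,\Re\zeta_{12}\cdot x}(\cos^2+\sin^2)=s^2e^{2s\,\Re\zeta_{12}\cdot x}$, hence keeps a definite sign and is bounded below; meanwhile the component of $S_3$ (resp. $S_4$) normal to $P$ is proportional to $\cos(s\,\Im\zeta_{34}\cdot x)$ (resp. $\sin(s\,\Im\zeta_{34}\cdot x)$). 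Consequently $\det(S_1,S_2,S_3)$ and $\det(S_1,S_2,S_4)$ equal, up to one and the same strictly positive bounded factor, $\cos(s\,\Im\zeta_{34}\cdot x)$ and $\sin(s\,\Im\zeta_{34}\cdot x)$ respectively (all exponential weights being real and positive on the bounded set $\overline X$, they factor out without affecting signs).

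I would then fix $s$ large enough that the uniform corrector bounds make these leading terms dominant on all of $\overline X$. Since at every point at least one of $|\cos(s\,\Im\zeta_{34}\cdot x)|$, $|\sin(s\,\Im\zeta_{34}\cdot x)|$ exceeds $1/\sqrt2$, the slabs $\{|\cos(s\,\Im\zeta_{34}\cdot x)|\ge 1/\sqrt2\}$ and $\{|\sin(s\,\Im\zeta_{34}\cdot x)|\ge 1/\sqrt2\}$, intersected with $X$ and split into their simply connected components, form a finite open cover $\{\Omega_{2i-1},\Omega_{2i}\}$; on each such slab the relevant determinant has constant sign, and taking $\epsilon_i,\tilde\epsilon_i=\pm1$ equal to that sign gives \eqref{eq:condji_lemma} for a uniform $c_0>0$. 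To upgrade the single good quadruple to an open set $G$, I would invoke continuity of the solution map $H^{1/2}(\partial X)\to C^1(\overline X)$: the determinants depend continuously on $\bfg$, so their infima over the compact closures $\overline{\Omega_k}$ stay positive, with the same cover and the same signs and a possibly smaller $c_0$, throughout a neighbourhood of the constructed quadruple.

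The \emph{main obstacle} I anticipate is precisely the uniform $C^1$ control of the correctors $\psi_\rho$: the entire argument rests on the heuristic $S\approx s\zeta e^{\rho\cdot x}$ being valid not only for $S$ itself but for the full expressions entering the $3\times3$ determinants, uniformly on $\overline X$. Propagating this through the gradient terms $\nabla\psi_\rho$ and the factor $F$ is what forces the high Sobolev regularity $H^{9/2+\eps}$ on $\sigma$ and dictates how large $s$ must be chosen so that the oscillatory leading behaviour genuinely controls the sign of each determinant on its slab.
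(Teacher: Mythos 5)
Your proposal follows essentially the same route as the paper's own proof: CGO solutions $\sigma^{-1/2}e^{\bm\rho\cdot x}(1+\psi_{\bm\rho})$ with uniform $C^1$ corrector control under $\sigma\in H^{9/2+\eps}(X)$, the same pair of null frequencies (the paper takes $\bm\rho_1=\rho(\bfe_2+i\bfe_1)$ and $\bm\rho_2=\rho(\bfe_3+i\bfe_1)$, matching your $\zeta_{12},\zeta_{34}$), the identical reduction of the two determinants to $\cos(\rho x_1)$ and $\sin(\rho x_1)$ times a common positive factor $\rho^3 e^{\rho(2x_2+x_3)}$, the same finite slab covering with signs absorbed into $\epsilon_i,\tilde\epsilon_i$, and the same continuity-in-$\bfg$ argument to fatten the single constructed quadruple into the open set $G$. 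The only details to adjust are cosmetic: take your slabs with strict inequalities (the paper uses threshold $1/2$ on open intervals such as $\rho x_1\in\left(-\frac{\pi}{3},\frac{\pi}{3}\right)+m\pi$) so that they genuinely form an \emph{open} cover.
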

Using $m\ge 3$ solutions that satisfy assumption \eqref{eq:condji}, the local reconstruction approach can be applied to reconstruct $\sigma$ on each of the $\Omega_i$'s, and some additional work is then required to patch these reconstructions together and show that the global reconstruction scheme ensures unique and stable reconstructions of $\sigma$ over $X$. Global reconstruction procedures are thus presented in section \ref{sec:global} and summarized in the following:

\begin{theorem}[3D global uniqueness and stability] \label{thm:stab3d}
    Let $X\subset\Rm^3$ be an open convex bounded domain, and let two sets of $m\ge 3$ solutions of \eqref{eq:conductivity} generate measurements $(H, \wtH)$ whose components belong to $W^{1,\infty}(X)$. Assume that one can define a couple $(\O, \tau)$ such that \eqref{eq:condji} is satisfied for both sets of solutions $S$ and $\wtS$. Let also $x_0\in\overline{\Omega}_{i_0}\subset\overline{X}$ and $\sigma(x_0)$, $\tilde \sigma(x_0)$, $\{S_{\tau(i_0)_i}(x_0), \wtS_{\tau(i_0)_i}(x_0)\}_{1\le i\le 3}$ be given. 
    %Then there exists a reconstruction procedure $\sigma = \P_{\O, j}(H)$ such that, denoting $\tilde \sigma = \P_{\O,j}(\wtH)$, the following stability estimate holds
Let $\sigma$ and $\tilde\sigma$ be the conductivities corresponding to the measurements $H$ and $\wtH$, respectively. Then we have the following stability estimate:
    \begin{align}
	\|\log\sigma-\log\tilde\sigma\|_{W^{1,\infty}(X)} \le C\big( \epsilon_0 + \|H-\wtH\|_{W^{1,\infty}(X)}\big),
	\label{eq:globstab3d}
    \end{align}
    where $\epsilon_0$ is the error at the initial point $x_0$
    \begin{align*}
	\epsilon_0 = |\log\sigma_0-\log\tilde\sigma_0| + \sum_{i=1}^3 \|S_{\tau(i_0)_i}(x_0) - \wtS_{\tau(i_0)_i}(x_0) \|.
    \end{align*} 
\end{theorem}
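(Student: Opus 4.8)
The plan is to derive the global estimate \eqref{eq:globstab3d} by applying the local reconstruction of Section~\ref{sec:local} on each member of the cover $\O=\{\Omega_k\}$ and then patching the pieces together by induction over the (finite) cover. The observation I would rely on to make patching work is that the reconstruction on any $\Omega_i$ is \emph{intrinsic}: once $\sigma$ and the three basis vectors $S_{\tau(i)_1},S_{\tau(i)_2},S_{\tau(i)_3}$ are recovered, every other $S_j$ is obtained pointwise from $S_j\cdot S_{\tau(i)_k}=H_{j,\tau(i)_k}$ since the triple spans $\Rm^3$; hence the reconstructed $\sigma$ and $\{S_j\}$ do not depend on which triple was used, and they agree on overlaps of subdomains even when the triples $\tau(i)$ differ.

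\textbf{The local step.} On a fixed $\Omega\equiv\Omega_i$ I would invoke the machinery around \eqref{eq:datatoF}: integrating the gradient-type equations for the columns $R_j$ of $R:\Omega\to SO_3(\Rm)$ along the segments \eqref{eq:segment} from a base point, subtracting the equations for $\wtS$, and applying a Gr\"onwall estimate controls $\sup_{\Omega}\sum_j\|R_j-\wtR_j\|$ by the initial error at the base point plus $\|H-\wtH\|_{W^{1,\infty}(\Omega)}$. Substituting into \eqref{eq:datatoF}, and using that $\det H\ge c_0^2$ renders $\nabla\log\det H$ a controlled function of $H\in W^{1,\infty}$, bounds $\|F-\wtF\|_{L^\infty(\Omega)}=\tfrac12\|\nabla\log\sigma-\nabla\log\tilde\sigma\|_{L^\infty(\Omega)}$; a final integration of $F-\wtF$ along \eqref{eq:segment} bounds the $L^\infty$ difference of $\log\sigma$. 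The outcome I expect is a local bound
\begin{align*}
    \|\log\sigma-\log\tilde\sigma\|_{W^{1,\infty}(\Omega_i)} \le C_i\Big(\eta_i+\|H-\wtH\|_{W^{1,\infty}(\Omega_i)}\Big),
\end{align*}
with $\eta_i=|\log\sigma-\log\tilde\sigma|+\sum_k\|S_{\tau(i)_k}-\wtS_{\tau(i)_k}\|$ the initial error at the base point of $\Omega_i$, and $C_i$ depending only on $c_0$, on $\|H\|_{W^{1,\infty}},\|\wtH\|_{W^{1,\infty}}$, and on the geometry of $\Omega_i$.

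\textbf{Patching.} Since $X$ is convex, hence connected, and $\O$ is finite, I would order the subdomains as $\Omega_{i_0},\Omega_{i_1},\dots$ so that each $\Omega_{i_{\ell+1}}$ meets $\bigcup_{p\le\ell}\Omega_{i_p}$ in a nonempty open set, and then induct. At $\ell=0$ the hypotheses give the initial data at $x_0\in\overline\Omega_{i_0}$, so the local step yields the estimate on $\Omega_{i_0}$ with $\eta_{i_0}\le\epsilon_0$. For the inductive step I would pick an overlap point $x_{\ell+1}\in\Omega_{i_{\ell+1}}\cap\bigcup_{p\le\ell}\Omega_{i_p}$; by the intrinsic-recovery property the values $\sigma,\tilde\sigma$ and $S_{\tau(i_{\ell+1})_k},\wtS_{\tau(i_{\ell+1})_k}$ are already known there, with error $\eta_{i_{\ell+1}}$ bounded by the error accumulated so far, so the local step propagates the reconstruction onto $\Omega_{i_{\ell+1}}$. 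Because the number of steps is at most the fixed cardinality of $\O$, unrolling the recursion $E_{\ell+1}\le C_{i_{\ell+1}}(E_\ell+\|H-\wtH\|_{W^{1,\infty}(X)})$ produces a single finite constant $C$, independent of the data, and gives \eqref{eq:globstab3d}.

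\textbf{Main obstacle.} The hard part will be the control of the error amplification across the patching: each local application multiplies the accumulated initial error by a constant $C_i>1$, and I must verify that these losses compound into one finite constant. This is true because $\O$ is finite, though $C$ degrades geometrically with its cardinality. The two points that make the induction legitimate are exactly the ones above: the overlaps must be genuine open sets, so a base point carrying the already-controlled error is always available; and the reconstructed $S_j$ must be triple-independent, so that the data handed to $\Omega_{i_{\ell+1}}$ is consistent with its own triple $\tau(i_{\ell+1})$. A minor technical point is that the segments \eqref{eq:segment} must stay inside each $\Omega_i$, which is why the cover of Lemma~\ref{lem:setup} should consist of subdomains adapted to segment integration.
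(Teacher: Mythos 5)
Your proposal is correct and rests on the same two quantitative pillars as the paper's proof: the local Gr\"onwall estimate obtained by integrating the closed system for $\bfR$ along segments (Proposition \ref{prop:stab3d}) and a stable change of triple at junction points, which in the paper takes the form of the transfer formula \eqref{eq:Rtransfer} together with Lemma \ref{lem:stab}. Your ``intrinsic recovery'' observation --- that any $S_j$ is determined pointwise from $S_j\cdot S_{\tau(i)_k}=H_{j,\tau(i)_k}$ because the triple is a basis with determinant bounded below by $c_0$ --- is exactly the mechanism behind \eqref{eq:Rtransfer}, namely $S^{(k)}(y_k)=(S^{(k-1)}(y_k))^{-T}H^{k-1,k}$; to make your induction quantitative you must also state the stability of this step, i.e.\ the analogue of \eqref{eq:stab_interstep}, which follows from the determinant lower bound and \eqref{eq:CT}, and which the paper isolates as a lemma while you only assert it. Where you genuinely differ is the patching geometry: the paper fixes, for each $x\in X$, a decomposition of the single segment $[x_0,x]$ into subsegments $[y_k(x),y_{k+1}(x)]$ each contained in one $\Omega_{\psi(x,k)}$ (the pair $(Y,\psi)$ of \eqref{eq:psi}--\eqref{eq:condpsi}), integrates along that segment switching frames at the junctions $y_k(x)$ as in Algorithm \ref{algo}, and concludes by bounding $\nabla\varepsilon$ pointwise and applying the fundamental theorem of calculus from $x_0$; you instead induct over a chain-ordered cover, reconstructing each $\Omega_{i_\ell}$ in full from a base point chosen in an overlap. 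Your route avoids constructing $(Y,\psi)$ --- a chain ordering of a finite open cover of the connected set $X$ always exists --- but it requires each $\Omega_i$ to be star-shaped with respect to its base point so that the local Gr\"onwall step reaches all of $\Omega_i$; this holds for the slab-type subdomains produced by Lemma \ref{lem:setup} and is comparable in strength to the paper's standing assumption \eqref{eq:condpsi}, so neither route is more general in substance. The paper's per-point segment chain yields the explicit constant $C_4^{2K}$ with a uniform number $K$ of junctions and mirrors the actual reconstruction procedure, whereas your cover induction is organizationally simpler; in both cases the constants compound exponentially in the number of junctions, as you correctly flag as the main obstacle.
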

The uniqueness and stability estimate results are constructive. 
As we shall see, the reconstruction depends on the choice of curves along which the sets of ordinary differential equations coming from the gradient equations are solved. In the presence of noise-free data, the reconstruction is unique as we just presented. In the presence of noise, the available data may then no longer be in the range of the measurement operator mapping the unknown coefficient $\sigma$ to the measurements $H(x)$. In this case, the reconstruction may depend on the choice of the curve along which integrations are performed. The compatibility conditions that the data need to verify in order for the reconstruction to be independent of the choice of path (Poincar\'e-type lemma) is straightforward in dimension $n=2$ but seems to be more challenging in dimension $n=3$.

Both theorems provide us with Lipschitz constants of stability in dimensions $n=2$ and $n=3$. These estimates show that reconstructions of the conductivity from knowledge of a sufficient number of power densities is a well-posed problem, unlike what is observed in the Calder\'on problem, which consists of reconstructing the diffusion coefficient from knowledge of the Dirichlet to Neumann map \cite{U-IP-09}. Thus UMEIT and UMOT allow us to combine the high contrast of optical and electrical properties of domains with the high resolution capabilities of ultrasound.

%In theory, when the data are generated by true solutions only, the reconstructions should not depend on the choice of curve, as the gradient fields should all be curl-free by construction. When noise is added however, it will be crucial to project the data onto the range of the measurement operator before inverting for $\sigma$. This analysis will be presented in future work.

The rest of the paper is structured as follows. In section \ref{sec:local} we derive the formulas that are necessary for local (global when $n=2$) reconstructions and stability results. Section \ref{sec:global} explains the global reconstruction scheme in the case $n=3$ and includes the proof of theorem \ref{thm:stab3d}. %Section \ref{sec:conclusion} offers some conclusions. 
%
%
%
%
%%
%%%%%%%%%%%%%%%%%%%%%%%%%%%%%%%
\section{Derivation of local reconstruction formulas} 
\label{sec:local}

In this section, we prove Theorem \ref{thm:stab2d} and Theorem \ref{thm:stab3d} in the setting where $X$ is replaced by an open bounded subset $\Omega\subset X\subset\Rm^n$ such that \eqref{eq:positiv} holds. The proof of Theorem \ref{thm:stab3d} in the case of arbitrary $X$ will be concluded in section \ref{sec:global}.

Knowledge of the matrix $H(x)$ together with the determinant condition \eqref{eq:positiv} allow us to reconstruct the matrix-valued function $S(x):=[S_1(x)|\dots|S_n(x)]$ up to an $SO_n(\Rm)$-valued function $R(x) = [R_1(x)|\dots|R_n(x)]$. This fact can be seen for instance by noticing that the orientation-preserving Gram-Schmidt procedure that creates the orthonormal $R_j$'s from the $S_i$'s with $\det R \det S >0 $ only involves coefficients that depend on the inner products $S_i\cdot S_j = H_{ij}$, which are indeed known. Note that $SH^{-\frac12}(x)$ is also a rotation-valued function on $\Omega$.

%Because the Gram-Schmidt procedure is not the only way to achieve this (one could pick for instance $T$ as the positive squareroot of $H^{-1}$), we define in a more generic manner a matrix-valued function $T(x) = \left\{ t_{ij}(x) \right\}_{1\le i,j\le n}$ that satisfies the properties 
More generally, we  take $T(x) = \left\{ t_{ij}(x) \right\}_{1\le i,j\le n}$ 
 a matrix-valued function on $\Omega$ that satisfies the properties
\begin{align} 
    T^T(x) T(x) = H^{-1}(x) \qandq 0< \det T(x) = (\det H(x))^{-\frac{1}{2}}, \quad x\in\Omega.
    \label{eq:Tmatrix}
\end{align}
Two examples are the symmetric  $T=H^{-\frac12}$ and 
the lower-triangular $T$ in \eqref{eq:TGS} below obtained by the Gram Schmidt procedure.
%(the equality $(\det T)^2 = (\det H)^{-1}$ is obvious from the first property, and we add the second requirement that $\det T\ge 0$). 
Denote $\widetilde T$ the matrix constructed from $\widetilde H$ in the same 
manner as $T$. We impose the existence of a constant  $C_T>0$ such that 
\begin{align} \label{eq:CT}
\|T-\widetilde T\|_{W^{1,\infty}(\Omega)} \le C_T
\|H-\widetilde H\|_{W^{1,\infty}(\Omega)}, 
\end{align}
here  $C_T$ only depends on $H$ and $\widetilde H$ and is completely determined by 
the way we construct  $T$ and $\wtT$. 
 The following simple lemma, whose proof is given in section \ref{app:GS}, justifies the above 
stability result for the Gram-Schmidt procedure:
%%--------------------------------------------------------------------------------------------------
\begin{lemma} \label{lem:GS}
    Let $H, \wtH \in W^{1,\infty}(\Omega)$ that are invertible over $\overline \Omega$ and set
\begin{align*}
m_0= \max\left( \|\wtH^{-1}\|_\infty, \|H^{-1}\|_\infty\right),
M_0= \max\left( \|\wtH^{-1}\|_\infty \|\wtH\|_{W^{1,\infty}(\Omega)}, 
\|H^{-1}\|_\infty \|H\|_{W^{1,\infty}(\Omega)}\right).
\end{align*}
If $(T, \wtT)$ are built from $(H, \wtH)$ using the Gram-Schmidt procedure, then the 
stability estimate holds
\begin{equation}
\|T-\widetilde T\|_{W^{1,\infty}(\Omega)} \le m_0^{3/2}C_n(M_0)
\|H-\widetilde H\|_{W^{1,\infty}(\Omega)}, 
\end{equation}
where $C_n(\xi)$ is a real  polynomial function in $\xi$ that only depends 
on the dimension $n$. 
\end{lemma}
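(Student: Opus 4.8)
The plan is to analyze the Gram--Schmidt construction step by step and show that each entry of $T$ is a rational function of the entries of $H^{-1}$ (equivalently, of $H$ together with $\det H$) with controlled denominators, so that the Lipschitz dependence of $T$ on $H$ follows from elementary estimates on sums, products, and quotients in the Banach algebra $W^{1,\infty}(\Omega)$. First I would fix the explicit form of the Gram--Schmidt matrix. Writing the columns of $T$ as the orthonormalized versions of the columns $S_j$ of $S$, all inner products $S_i\cdot S_j$ that appear are exactly the entries $H_{ij}$; since the construction requires only these inner products and the positivity $\det H\ge c_0^2>0$, every coefficient $t_{ij}$ is built by finitely many additions, multiplications, and one division (by a Gram determinant of a principal submatrix, i.e.\ by a quantity bounded below) starting from the entries of $H$. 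Thus $T=P(H)$ for an explicit map $P$ that is a rational function with nonvanishing denominators on the relevant set.

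The key algebraic estimates are the two standard product and quotient rules in $W^{1,\infty}$. For $f,g\in W^{1,\infty}(\Omega)$ one has
\begin{align*}
    \|fg\|_{W^{1,\infty}(\Omega)} \le C\,\|f\|_{W^{1,\infty}(\Omega)}\,\|g\|_{W^{1,\infty}(\Omega)},
\end{align*}
and, when $|g|$ is bounded below by a positive constant $\delta$,
\begin{align*}
    \left\|\frac{1}{g}\right\|_{W^{1,\infty}(\Omega)} \le C\,\delta^{-2}\,\|g\|_{W^{1,\infty}(\Omega)}.
\end{align*}
The second estimate is where the lower bound on $\det H$ (and hence $m_0=\|H^{-1}\|_\infty<\infty$) enters: it converts the invertibility hypothesis into a quantitative control of the denominators. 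Applying these inductively along the finitely many steps of Gram--Schmidt, the difference of the numerators is bounded by $\|H-\wtH\|_{W^{1,\infty}(\Omega)}$ times a polynomial in the norms of $H,\wtH$ and their inverses, while the denominators are simultaneously controlled from below for both $H$ and $\wtH$. Collecting these bounds, tracking which factors carry powers of $m_0$ and which carry powers of $M_0$, yields the claimed form $m_0^{3/2}C_n(M_0)\|H-\wtH\|_{W^{1,\infty}(\Omega)}$, with $C_n$ a polynomial whose degree is determined by how many multiplicative steps the dimension-$n$ procedure requires.

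The main obstacle is purely bookkeeping rather than conceptual: one must carry out the difference estimate $T-\wtT = P(H)-P(\wtH)$ through every stage of the recursion, repeatedly using the telescoping identity $ab-\tilde a\tilde b = (a-\tilde a)b + \tilde a(b-\tilde b)$ (and its quotient analogue) so that each factor is evaluated at either $H$ or $\wtH$ and estimated by $m_0$ or $M_0$, and then verify that the accumulated powers of $m_0$ collapse exactly to $m_0^{3/2}$ and that everything else is absorbed into a polynomial $C_n(M_0)$. The only subtlety worth flagging is the normalization step, where one divides by the square root of a Gram determinant: one should use that $\sqrt{\cdot}$ is Lipschitz on intervals bounded away from $0$, with constant controlled by the lower bound coming from $m_0$, which is precisely the source of the half-integer power $m_0^{3/2}$. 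Since the full computation is routine, for the proof I would carry out the first one or two Gram--Schmidt steps in detail to exhibit the mechanism and the appearance of $m_0$ and $M_0$, and then indicate that the remaining steps follow by the same argument.
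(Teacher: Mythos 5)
Your proposal is correct and follows essentially the same route as the paper: the paper's proof likewise works from the explicit Gram--Schmidt formulas \eqref{eq:TGS}, which express each $t_{ij}$ as an algebraic function of the entries $H_{ij}$ with square roots of principal Gram determinants in the denominators, bounds those denominators below via $H_{jj}\ge\|H^{-1}\|_\infty^{-1}$ and $\det H\ge\|H^{-1}\|_\infty^{-n}$ (Remark~\ref{rem:Hmun}), and then obtains the $W^{1,\infty}$ estimate by exactly the elementary telescoping difference bounds you describe, carried out explicitly only in dimension $n=2$. Your flagged subtleties (the Lipschitz control of $\sqrt{\cdot}$ away from zero as the source of the half-integer power of $m_0$, and the quantitative denominator control coming from $m_0$ rather than a separate positivity assumption) are precisely the mechanisms the paper relies on.
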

%%--------------------------------------------------------------------------------------------------
 We also define $T^{-1} = \{t^{ij}\}_{1\le i,j\le n}$ and the vector fields 
\begin{align}
    V_{ij} := \nabla(t_{ik})t^{kj}, \quad \text{ i.e. } \quad  V_{ij}^l := \partial_l(t_{ik})t^{kj},\quad 1\le i,j,l \le n.
    \label{eq:Vijs}
\end{align}
Here and below, repeated indices are summed over. Let
$\widetilde V$ be a vector-valued matrix constructed from $\widetilde H$ as $V$ is 
constructed from $H$.  A simple calculation yields
%For stability purposes, we will need the following requirement that there exists a constant $C_V$ such that for all $1\le i,j\le n$
%We impose the existence of an universal constant  $C_V>0$ such that 
\begin{align}
    %\|V_{ij}\|_{L^\infty(\Omega)} \le C_V \|H\|_{W^{1,\infty}(\Omega)} \qandq
     \| V_{ij} - \widetilde V_{ij} \|_{L^\infty(\Omega)} \le C_V
 \|H-\widetilde H\|_{W^{1,\infty}(\Omega)}, \qquad 1\le i,j\le n.
    \label{eq:stabvij}
\end{align}
Here, $C_V =  \left( \|H^{-1}\|_{\infty}^{1/2} 
\|H\|_{\infty} + \|\wtH^{-1}\|_{\infty}^{1/2}  
\|\wtH\|_{\infty} \right)C_T +\|H^{-1}\|_{\infty} $, with $C_T$ defined in \eqref{eq:CT}. \\

%That there exists such $T$ functions such that property \eqref{eq:stabvij} holds is justified in the following lemma, whose proof is given in appendix \ref{app:GS}.

From $S$ and $T$, let us now build the matrix $R$ by defining
\begin{align}
    R(x) := S(x) T(x)^T, \quad \text{i.e. } R_i(x) = t_{ij}(x) S_j(x), \quad 1\le i\le n,\quad x\in\Omega.
    \label{eq:StoR}
\end{align}
From conditions \eqref{eq:Tmatrix} and the fact that $S(x)^T S(x) = H(x)$ for every $x\in\Omega$, the matrix $R$ \eqref{eq:StoR} satisfies $R^T(x)R(x) = \Imm_n$ for all $x\in\Omega$, as well as $\det R(x) = 1$, thus $R(x)\in SO_n(\Rm)$ for all $x\in\Omega$. Moreover, plugging relation \eqref{eq:StoR} into equation \eqref{eq:divSj} and using the vector calculus identity $\div (fV) = \nabla f\cdot V + f\div V$, we obtain
\begin{align*}
    \div R_i = \div (t_{ij} S_j) = (\nabla t_{ij})\cdot S_j + t_{ij} \div S_j = (\nabla t_{ij}) t^{jk}\cdot R_k - t_{ij} F\cdot S_j.
\end{align*}
Thus the $R_i$'s satisfy the following divergence equation:
\begin{align}
    \div R_i = V_{ik}\cdot R_k - F\cdot R_i, \quad 1\le i\le n.
    \label{eq:divRi}
\end{align}
In a similar manner, one can derive the following curl-type equations for the $R_i$'s:
\begin{align}
    n=2\,: \,[\nabla,R_i] &= [V_{ik},R_k] + [F,R_i], \quad i=1,2, \label{eq:curlRi2d} \\
    n=3\,: \,\curl R_i &= V_{ik}\times R_k + F\times R_i, \quad i=1,2,3.
    \label{eq:curlRi3d}
\end{align}
We now show that the redundancies in the systems \eqref{eq:divRi}-\eqref{eq:curlRi2d} when $n=2$ and \eqref{eq:divRi}-\eqref{eq:curlRi3d} when $n=3$ allow us to derive formula \eqref{eq:datatoF} for $F$ in terms of the $R_i$'s and the data. This formula is successively proved for $n=2$ and $n=3$ using vector calculus identities in each dimension, respectively. 

\begin{remark}[Geometric motivation of the next proofs.]
    The generalization to $n\geq 4$ may be found in \cite{MB}, where the natural tool is differential geometry, in particular connections, exterior derivatives and the Hodge transform. Once condition \eqref{eq:positiv} is assumed, an expression of equation \eqref{eq:datatoF} in the basis $(S_1,\dots,S_n)$ may be proved in general dimension by studying the properties of the dual basis of $(S_1,\dots,S_n)$ with respect to the Euclidean metric. Once \eqref{eq:datatoF} is derived, the divergence and curl equations that we have for the $S_i$'s (or equivalently the $R_i$'s) have their right-hand-sides that only depend on the data $H$ and its partial derivatives, and on the $S_i$'s to zero-th order. Then the work is to pass from this resulting system of PDE's to full gradient equations (i.e. total covariant derivatives) for the unknown basis $(S_1,\dots,S_n)$ or $(R_1,\dots,R_n)$, equivalently.    
\end{remark}

\subsection{Elimination of source term and proof of formula \eqref{eq:datatoF}}

\subsubsection{The case $n=2$} \label{sec:F2d}
First notice that because $R\in SO_2(\Rm)$, we have the relations 
\begin{align} \label{eq:defI2}
    JR_i = \varepsilon_{ij} R_j, \qquad (i,j)\in I_2 := \{(1,2), (2,1)\},   
\end{align}
where $\varepsilon_{12} = - \varepsilon_{21} = 1$ and we have defined $J=\left(\begin{matrix} 0&-1\\1&0\end{matrix}\right)$. In particular, this implies that
\begin{align*}
    \div R_i = [\nabla, JR_i] = \varepsilon_{ij} [\nabla,R_j], \qquad (i,j)\in I_2, 
\end{align*}
as well as the relations, for any vector field $A$, 
\begin{align*}
    [A,R_i] = JA\cdot R_i = -A\cdot JR_i = -\varepsilon_{ij} A\cdot R_j, \qquad (i,j)\in I_2.
\end{align*}
Together with the system of equations \eqref{eq:divRi}-\eqref{eq:curlRi2d}, these relations allow us to get the components of $F$ in the basis $(R_1,R_2)$ (here, $(i,j)\in I_2$):
\begin{align*}
    F\cdot R_i &= -\div R_i + V_{ii}\cdot R_i + V_{ij}\cdot R_j  \\
    &= - \varepsilon_{ij} [\nabla,R_j] + V_{ii}\cdot R_i + V_{ij}\cdot R_j \\
    &= - \varepsilon_{ij}([F,R_j] + [V_{ji},R_i] + [V_{jj},R_j]) + V_{ii}\cdot R_i + V_{ij}\cdot R_j \\
    &= -F\cdot R_i + V_{ji}\cdot R_j - V_{jj}\cdot R_i + V_{ii}\cdot R_i + V_{ij}\cdot R_j,
\end{align*}
and hence the formula 
\begin{align}
    2 F\cdot R_i = - (V_{ii} + V_{jj})\cdot R_i + 2V_{ii}\cdot R_i + (V_{ij}+V_{ji})\cdot R_j,
    \label{eq:FdotR12d}
\end{align}
for $(i,j)\in I_2$.
%Likewise, starting from $F\cdot R_2 = -\div R_2 + V_{21}\cdot R_1 + V_{22}\cdot R_2$, one arrives at
%\begin{align}
%    2 F\cdot R_2 = - (V_{11} + V_{22}) + (V_{12}+V_{21})\cdot R_1 + 2V_{22}\cdot R_2.
%    \label{eq:FdotR22d}
%\end{align}
We then obtain equation \eqref{eq:datatoF} by plugging \eqref{eq:FdotR12d} into the relation 
\begin{math}
    F = (F\cdot R_1)R_1 + (F\cdot R_2)R_2,
\end{math}
and using the following identity%, proved in appendix \ref{app:Liouville}
\begin{align}
    - V_{11} - V_{22} = \nabla\log d,
    \label{eq:Liouville2d}
\end{align}
whose proof may be found in section \ref{app:Liouville} below.

%
%%%%%%%%%%%%%%%%%%%%%%%%
\subsubsection{The case $n=3$} \label{sec:F3d}
We recall the vector calculus identity $\div (A\times B) = B\cdot\curl A - A\cdot\curl B$, which holds for any two smooth vector fields $A$ and $B$, together with the relations
\begin{align}\label{eq:defI3}
    R_i(x)\times R_j(x) = R_k(x), \qquad (i,j,k)\in A_3:=\big\{(1,2,3), (2,3,1), (3,1,2)\big\},
    %\quad R_2(x)\times R_3(x) = R_1(x), \qandq R_3(x)\times R_1(x) = R_2(x),
\end{align}
which hold for all $x\in\Omega$ since $R(x)\in SO_3(\Rm)$. Combining these relations, we have
\begin{align*}
    \div R_i = \div (R_j\times R_k) = R_k\cdot\curl R_j - R_j\cdot\curl R_k, \quad (i,j,k)\in A_3.
\end{align*}
Using equations \eqref{eq:divRi} and \eqref{eq:curlRi3d}, we obtain
\begin{align*}
    V_{im}\cdot R_m - F\cdot R_i &= R_k\cdot (V_{jm}\times R_m + F\times R_j ) - R_j \cdot (V_{km}\times R_m + F\times R_k) \\
    &= 2F\cdot R_i - V_{ji}\cdot R_j + V_{jj}\cdot R_i - V_{ki}\cdot R_k + V_{kk}\cdot R_i. 
\end{align*}
Above, we sum over $m$ but not over $(i,j,k)\in A_3$.
Thus we get 
\begin{align*}
    3F\cdot R_i = -(V_{ii} + V_{jj} + V_{kk})\cdot R_i + 2V_{ii}\cdot R_i + (V_{ij}+V_{ji})\cdot R_j + (V_{ik}+V_{ki})\cdot R_k.
\end{align*}
%Writing the other two equalities 
%\begin{align*}
%    \div R_2 = R_1\cdot\curl R_3 - R_3\cdot\curl R_1 \qandq \div R_3 = R_2\cdot\curl R_1 - R_1\cdot\curl R_2,
%\end{align*}
%and doing similar calculations using equations \eqref{eq:divRi} and \eqref{eq:curlRi3d}, we arrive at
%\begin{align*}
%    3F\cdot R_2 = -(V_{11} + V_{22} + V_{33})\cdot R_2 + (V_{21}+V_{12})\cdot R_1 + 2V_{22}\cdot R_2 + (V_{23}+V_{32})\cdot R_3, \\
%    3F\cdot R_3 = -(V_{11} + V_{22} + V_{33})\cdot R_3 + (V_{13}+V_{31})\cdot R_1 + (V_{23}+V_{32})\cdot R_2 + 2V_{33} \cdot R_3.
%\end{align*}
We finally arrive at \eqref{eq:datatoF} by plugging the three components $F\cdot R_i$ into the relation
\begin{math}
    F = (F\cdot R_1) R_1 + (F\cdot R_2) R_2 + (F\cdot R_3)R_3
\end{math}
and using the following identity, %proved in appendix \ref{app:Liouville}
\begin{align}
    - V_{11} - V_{22} - V_{33} = \nabla\log D.
    \label{eq:Liouville3d}
\end{align}
whose proof may be found in section \ref{app:Liouville} below.

\subsubsection{Proof of identities \eqref{eq:Liouville2d} and \eqref{eq:Liouville3d}} \label{app:Liouville}

We prove these identities in general dimension $n\ge 2$. First notice that for any $k=1\dots n$, the function $T(x)$ satisfies locally the (tautological) relation 
\begin{align*}
    \partial_k T(x) = V^k (x) T(x),
\end{align*}
where we have defined $V^k := \{ \partial_k (t_{il})t^{lj} \}_{i,j} = (\partial_k T) T^{-1}$. Therefore by Liouville's formula, we obtain that
\begin{align*}
    \partial_k (\log \det T) = \text{trace } (V^k) = \sum_{i=1}^n V_{ii}^k.
\end{align*}
Finally noting that $\det T = d^{-1}$ when $n=2$ and $\det T = D^{-1}$ when $n=3$, the identities \eqref{eq:Liouville2d} and \eqref{eq:Liouville3d} are proved component by component.

%
%%
%%%%%%%%%%%%%%%%%%%
\subsection{Resolution in $SO_n(\Rm)$ and stability analyses}

Plugging formula \eqref{eq:datatoF} into \eqref{eq:divRi}, \eqref{eq:curlRi2d} and \eqref{eq:curlRi3d} provides a closed system of equations for the function $R$. It remains to show that $R$ is then uniquely and stably determined by such equations. We again separate the cases $n=2$ and $n=3$. %We recall that the set of direct permutations $A_n$ of $(1,\ldots,n)$ is defined in \eqref{eq:defI2} and \eqref{eq:defI3}. %The task is now to parameterize $SO_n(\Rm)$ properly and find gradient-type equations for the parameters that define $R(x)$. 

\subsubsection{The case $n=2$} \label{sec:gradtheta}
Since $SO_2(\Rm)$ is a one-dimensional manifold, $R = [R_1|R_2]$ is described by a $\Sm^1$-valued function $\theta(x)$ such that, at each point, $R_1(\theta) = (\cos\theta,\sin\theta)^T$ and $R_2(\theta) = JR_1(\theta)$. We wish to derive an equation for $\nabla \theta$.
Plugging the expression \eqref{eq:datatoF} of $F$ into \eqref{eq:divRi}, we arrive at
\begin{align}
    \begin{split}
	\div R_i &= \frac{1}{2} \left[ -(\nabla\log d)\cdot R_i + (V_{ij}-V_{ji})\cdot R_j \right], \quad (i,j)\in I_2. %\\ \div R_2 &= \frac{1}{2} \left[ - (V_{12}-V_{21})\cdot R_1 -(\nabla\log d)\cdot R_2 \right].
    \end{split}
    \label{eq:divRiF2d}
\end{align}

Let us now derive a differential equation for $R = [R_1|R_2]$. The relation $R^T R=\Imm_2$ implies that $R^T \partial_i R \in \A_2(\Rm)$, the space of two-dimensional anti-symmetric matrices. For $i=1,2$, i.e., it can be written in the form $R^T \partial_i R = \alpha_i J$, where $\alpha_i = R_2\cdot \partial_i R_1$. Defining $\bm\alpha:= \alpha_i \bfe_i$, it is clear that $\bm\alpha\cdot Z = R_2\cdot[(Z\cdot\nabla)R_1]$ for any vector field $Z$. For the sequel, we need the following vector calculus identity, which holds for any smooth vector field $A$
\begin{align}
    \nabla |A|^2 &= 2(A\cdot\nabla)A - 2[\nabla,A]JA.
    \label{eq:vci2d2}
\end{align}
We decompose $\bm\alpha$ in the basis $(R_1, R_2)$ and use equations \eqref{eq:divRi} 
\begin{align*}
    \bm\alpha &= (\bm\alpha\cdot R_1)R_1 + (\bm\alpha\cdot R_2)R_2 
    = (R_2\cdot [(R_1\cdot\nabla) R_1]) R_1 + (R_2\cdot [(R_2\cdot\nabla) R_1]) R_2 \\
    &= (R_2\cdot [(R_1\cdot\nabla) R_1]) R_1 - (R_1\cdot [(R_2\cdot\nabla) R_2]) R_2.
\end{align*}
We now use \eqref{eq:vci2d2}, which, in this case, becomes $(R_i\cdot\nabla)R_i = [\nabla, R_i] JR_i = - (\div R_j) R_j$ for $(i,j)\in I_2$, %and similarly $(R_2\cdot\nabla)R_2 = -[\nabla,R_2]R_1$, 
and then equation \eqref{eq:divRiF2d} to obtain that
\begin{align*}
    \bm\alpha &= - (\div R_2) R_1 + (\div R_1) R_2 \\
    &= - \frac{1}{2} (-(V_{12}-V_{21})\cdot R_1 - (\nabla\log d)\cdot R_2) R_1 + \frac{1}{2}( -(\nabla\log d)\cdot R_1 + (V_{12}-V_{21})\cdot R_2) R_2 \\
    &= \frac{1}{2} [V_{12}-V_{21} - (R_2\otimes R_1 - R_1\otimes R_2)\nabla\log d] = \frac{1}{2} [V_{12}-V_{21} - J\nabla\log d].
\end{align*}
Finally, given the above parameterization $R(\theta)$, one checks using the chain rule that $R^T \partial_i R = (\partial_i\theta) J,\ i=1,2$. Using the preceding calculations, we obtain that 
\begin{align}
    \nabla\theta = \bm\alpha = \frac{1}{2} [V_{12}-V_{21} - J\nabla\log d].
    \label{eq:gradtheta}
\end{align}

\paragraph{Reconstruction procedure.}

Let $H$ be a given data set corresponding to an illumination $\bfg$ that guarantees condition \eqref{eq:positiv} throughout $X$. From $S$, we construct $R$ using the Gram-Schmidt procedure, and parameterize $R$ with a function $\theta:X\to\Sm^1$ as above. Let $x_m\in\partial X$ be the global minimum of the illumination $g_1$. Then according to the maximum principle \cite{gt1}, $u_1$ achieves its minimum over $X$ at $x_m$. In particular, at this point, we have the relation
\begin{align*}
    \frac{\nabla u_1}{|\nabla u_1|}(x_m) = -\nu(x_m) = \frac{S_1(x_m)}{|S_1(x_m)|} = R_1(\theta(x_m)),
\end{align*}
where $\nu(x_m)$ denotes the outgoing normal vector to $X$ at $x_m$. Thus $\theta(x_m)$ is known. Therefore, we can reconstruct $\theta(x)$ at every $x\in X$ by integrating \eqref{eq:gradtheta} along the segment $[x_m,x]$ parameterized by $\gamma_{x_m,x}$:
\begin{align*}
    \theta(x) &= \theta(x_m) + \int_0^1 \dot\gamma_{x_m,x}(t)\cdot \nabla\theta (\gamma_{x_m,x}(t))\ dt  \\
    &= \theta(x_m) + \frac{1}{2}(x-x_m)\cdot\int_0^1 (V_{12}-V_{21} - J\nabla\log d) (\gamma_{x_m,x}(t))\ dt.
\end{align*}
Once $\theta$ is recovered throughout $X$, one then reconstructs $\sigma(x)$ for all $x\in X$ from the knowledge of $\sigma(x_0)$ for some $x_0\in X$ and integrating equation \eqref{eq:datatoF} along the segment $[x_0,x]$, that is
\begin{align*}
    \log\sigma(x) &= \log\sigma(x_0) + 2\int_0^1 \dot\gamma_{x_0,x}(t)\cdot F(\gamma_{x_0,x}(t))\ dt \\
    &= \log\sigma(x_0) + (x-x_0)\cdot\int_0^1 (\nabla\log d + ((V_{ij} + V_{ji})\cdot R_i)R_j) (\gamma_{x_0,x}(t))\ dt.
\end{align*}

\paragraph{Proof of Theorem \ref{thm:stab2d}}
We now prove the stability results stated in Theorem \ref{thm:stab2d}.
%\begin{proof}[Proof of proposition \ref{thm:stab2d}]
    Let two data sets $(H,\wtH)\in W^{1,\infty}(X)$ correspond to identical illuminations $\bfg = \tilde\bfg$, and conductivities $\sigma$ and $\tilde\sigma$ that coincide at some $x_0\in \overline X$. Let $T, \wtT$ be built from $H, \wtH$ such that they satisfy conditions \eqref{eq:Tmatrix} and \eqref{eq:stabvij}. We then define $R = S T^T$ and $\wtR = \wtS \wtT^T$ and parameterize $R$ and $\wtR$ by their angle functions $\theta,\tilde\theta:X\to \Sm^1$ as above. 
    Using estimates \eqref{eq:stabvij} and taking the difference of \eqref{eq:gradtheta} for $\theta$ and $\tilde\theta$ yields
    \begin{align}
	2|\nabla(\theta-\tilde\theta)| \le |V_{21}-\wtV_{21}| + |V_{12}-\wtV_{12}| + |\nabla(\log d-\log\tilde d)| \le C \|H-\wtH \|_{W^{1,\infty}(X)}.
	\label{eq:nablathetadiff}
    \end{align}
    Moreover, if $x_m\in\partial X$ is the global minimum of the illumination $g_1$, we have seen in the reconstruction procedure that $R(\theta(x_m)) = \wtR(\tilde\theta(x_m)) = -\nu(x_m)$ and thus $\theta(x_m) = \tilde \theta(x_m)$. Therefore we have, for any $x\in\overline X$, 
    \begin{align*}
	\theta(x)-\tilde\theta(x) &= \theta(x_m)-\tilde\theta(x_m) + (x-x_m)\cdot\int_0^1 \nabla(\theta-\tilde\theta) ( \gamma_{x_m,x}(t))\ dt,
    \end{align*}
    and thus $|\theta(x)-\tilde\theta(x)| \le \Delta(X) \|\nabla(\theta-\tilde\theta)\|_{L^\infty}(X)$, where $\Delta(X)$ denotes the diameter of $X$. Combined with \eqref{eq:nablathetadiff}, this yields
    \begin{align}
	\|\theta-\tilde\theta\|_{W^{1,\infty}(X)} \le C_\theta \|H- \tilde H\|_{W^{1,\infty}(X)}.
	\label{eq:stabtheta}
    \end{align}
    This inequality is also a stability statement for the reconstruction of the function $\theta$. On to the stability of $\sigma$, we have the pointwise estimate
    \begin{align*}
	|\nabla (\log\sigma-\log\tilde\sigma)| &\le |\nabla(\log d-\log\tilde d)| + |((V_{pk}-\wtV_{pk} + V_{kp}-\wtV_{kp})\cdot R_k) R_p| \\
	&\quad + |(\wtV_{pk}+\wtV_{kp})\cdot (R_k-\wtR_k) R_p| + |((\wtV_{pk}+\wtV_{kp})\cdot \wtR_k) (R_p-\wtR_p)| \\
	&\le |\nabla(\log d-\log\tilde d)| + 2 \sum_{1\le k,p\le 2} |V_{pk}-\wtV_{pk}| + 4 |\theta-\tilde\theta| \sum_{1\le k,p\le 2} |\wtV_{pk}|,
    \end{align*}
    where we have used the Cauchy-Schwarz inequality and the fact that $|\partial_\theta R_p|\le 1$ for $p=1,2$. Using estimates \eqref{eq:stabtheta} and \eqref{eq:stabvij} implies directly 
    \begin{align*}
	\|\nabla(\log\sigma-\log\tilde\sigma)\|_{L^\infty(X)} \le C_1 \|H- \wtH\|_{W^{1,\infty}(X)}.
    \end{align*}
    The proposition is proved provided that $\sigma$ and $\tilde\sigma$ coincide at $x_0$, which allows us to control any difference $|\log\sigma(x)-\log\tilde\sigma(x)|$ by $\Delta(X)\|\nabla(\log\sigma-\log\tilde\sigma)\|_{L^\infty(X)}$.   This concludes the proof.
%\end{proof}

\subsubsection{The case $n=3$} \label{sec:SO3}
In the three-dimensional case, the unknown is now an $SO_3(\Rm)$-valued function $R$ and is thus parameterized by  $3$ scalar functions in principle. However, because every $3$-parameter chart on $SO_3(\Rm)$ has singularities, as shown in \cite{S-SIAMREV-64}, using such a chart to represent the unknown function $R$ may yield instabilities during the reconstruction procedure. In this paper, we thus represent $R$ as a $9$-vector $\bfR = (R_1^T, R_2^T, R_3^T)^T$. We then derive gradient equations for the components of $\bfR$, which allow local reconstruction of $\bfR$ by integration of an appropriate system of ordinary differential equations.

\begin{remark} \label{rem:quaternion}
    For numerical purposes, we want to choose a parameterization of $SO_3(\Rm)$ that involves as few parameters as possible while still guaranteeing stability. This can be achieved by the $4$-parameter quaternion representation, whose details will appear elsewhere. %, and shall be used for subsequent numerical simulations. 
\end{remark}

Let us now find differential equations for $R$. Taking partial derivatives of the relation $R^T R = \Imm_3$ yields that for $k=1,2,3$, we have that $R^T \partial_k R\in \A_3(\Rm)$, the space of antisymmetric $3\times 3$ matrices, which we write 
\begin{align}
    R^T \partial_k R = \left[
	\begin{array}{ccc}
	    0 & \alpha_3^k(x) & -\alpha_2^k(x) \\ -\alpha_3^k(x) & 0 & \alpha_1^k(x) \\ \alpha_2^k(x) & -\alpha_1^k(x) & 0
	\end{array}
	\right], \quad k=1,2,3.
	\label{eq:asym}
\end{align}
We now focus on expressing the vector fields $\bm\alpha_i := \alpha_i^k \bfe_k$ in terms of the data. In order to do so, we recall that $A_3$ is the set of direct permutations of $(1,2,3)$. We notice that 
\begin{align}
    \bm\alpha_i\cdot R_p = R_j \cdot [(R_p\cdot\nabla) R_k], \quad (i,j,k) \in A_3, \quad p\in \{i,j,k\}. 
    \label{eq:identity3d}
\end{align}
We now use the following identities, which hold for any smooth vector fields $A$ and $B$
{\small\begin{align*}
    2(B\cdot\nabla)A &= \nabla(A\cdot B) + \curl(A\times B) - A\times(\curl B) - B\times(\curl A) - (\div B)A + (\div A)B, \\
    2 (A\cdot\nabla) A &= \nabla |A|^2 - 2 A\times (\curl A),
\end{align*}}
where $A = R_k$, $B = R_p$, and we take the dot product with $R_j$. For $p=i$, we obtain
\begin{align}
    2 \bm\alpha_i\cdot R_i &= 2 R_j\cdot[(R_i\cdot\nabla)R_k] \nonumber\\
    &= R_j\cdot [\curl R_j - R_k \times (\curl R_i) - R_i \times(\curl R_k) - (\div R_i) R_k + (\div R_k) R_i] \nonumber\\
    &= R_j\cdot (\curl R_j) - R_i\cdot (\curl R_i) + R_k\cdot (\curl R_k). \label{eq:alphaiRi}
\end{align}
For $p=j$, we get
\begin{align}
    2 \bm\alpha_i\cdot R_j &= 2 R_j\cdot[(R_j\cdot\nabla)R_k] = -2 R_k\cdot [(R_j\cdot\nabla)R_j] = 2 R_k\cdot [R_j\times(\curl R_j)] \nonumber \\
    &= -2 R_i\cdot (\curl R_j). \label{eq:alphaiRj}
\end{align}
Finally for $p=k$, we get
\begin{align}
    2\bm\alpha_i\cdot R_k = 2 R_j\cdot [(R_k\cdot\nabla) R_k] = -2 R_j \cdot [R_k\times(\curl R_k)] = -2 R_i \cdot (\curl R_k). 
    \label{eq:alphaiRk}
\end{align}
We have just proved that the vector fields $\bm\alpha_l$ can be expressed in terms of the quantities $\{ R_p\cdot(\curl R_q)\}_{1\le p,q\le 3}$. These quantities, in turn, can be expressed in terms of $R$ and the data using equation \eqref{eq:curlRi3d}. For $(i,j,k)\in A_3$, we have
\begin{align}
    \begin{split}
	(\curl R_i)\cdot R_i &= (V_{il}\times R_l + F\times R_i)\cdot R_i = - V_{ij}\cdot R_k + V_{ik}\cdot R_j \\
	(\curl R_i)\cdot R_j &= (V_{il}\times R_l + F\times R_i)\cdot R_j = V_{ii}\cdot R_k - V_{ik}\cdot R_i + F\cdot R_k \\  
	(\curl R_i)\cdot R_k &= (V_{il}\times R_l + F\times R_i)\cdot R_k = -V_{ii}\cdot R_j + V_{ij}\cdot R_i - F\cdot R_j.	
    \end{split}
    \label{eq:curlRiRj}    
\end{align}

To summarize, using \eqref{eq:datatoF} for $F$ and  identity \eqref{eq:Liouville3d}, we have for $(i,j,k)\in A_3$ that
\begin{align}
    \begin{split}
	\bm\alpha_i\cdot R_i &= \frac{1}{2} \left[ (V_{kj}-V_{jk})\cdot R_i - (V_{ik} + V_{ki})\cdot R_j + (V_{ji}+V_{ij})\cdot R_k \right], \\
	\bm\alpha_i\cdot R_j &= \frac{1}{3} \left[ (V_{ik} + V_{ki})\cdot R_i + (V_{kj} - 2 V_{jk})\cdot R_j + (2 V_{jj} + V_{kk} - V_{ii})\cdot R_k \right], \\
	\bm\alpha_i\cdot R_k &= \frac{1}{3} \left[ -(V_{ij}+V_{ji})\cdot R_i - (V_{jj} + 2V_{kk} - V_{ii})\cdot R_j + (2V_{kj}-V_{jk})\cdot R_k \right].
    \end{split}
    \label{eq:alphas}
\end{align}
These calculations show that there exist vector fields $A_{pqr}$ such that 
\begin{align}
    \bm\alpha_p = (A_{pqr}\cdot R_q)R_r, \qquad\text{i.e.}\qquad \alpha_p^k = A_{pqr}^m R_q^m R_r^k,\qquad  k=1,2,3.
    \label{eq:alphas2}
\end{align}

Left-multiplying equation \eqref{eq:asym} by $R$, we obtain the differential equations
\begin{align}
    \partial_k \bfR = \left[
    \begin{array}{c}
	\partial_k R_1 \\
	\partial_k R_2 \\
	\partial_k R_3
    \end{array}
    \right] = \left[
    \begin{array}{c}
	- \alpha_3^k R_2 + \alpha_2^k R_3 \\
	-\alpha_1^k R_3 + \alpha_3^k R_1 \\
	-\alpha_2^k R_1 + \alpha_1^k R_2	
    \end{array}
    \right], \quad 1\le k\le 3.
    \label{eq:gradR}
\end{align}
Since the expression \eqref{eq:alphas2} is purely quadratic in the components of $\bfR$, the above system can be summarized as
\begin{align}
    \partial_k \bfR = \sum_{|\alpha|=3} Q_\alpha^k \bfR^\alpha, \quad \left( \bfR^\alpha = \prod_{i=1}^9 R_i^{\alpha_i} \right),
    \label{eq:gradR1}
\end{align}
where the functions $Q_\alpha^k:\Omega\to \Rm^9$ are linear combinations of the data vectors $V_{ij}$ \eqref{eq:Vijs}. 

%
%%%%%%%%%%%%%%%%%%%%%%%%%%
\paragraph{Local reconstructibility and stability estimates.}

The system \eqref{eq:gradR1} allows us to reconstruct $\bfR$ locally provided that we know its value $\bfR(x_0) = \bfR_0$ for some $x_0 \in \Omega$. The reconstruction of $\bfR(x)$ for $x\in\Omega$ is done by integrating the following ODE
\begin{align}
    \frac{d}{dt} \bfR(\gamma(t)) = G(\gamma(t), \bfR(\gamma(t))), \quad \bfR(\gamma(0)) = \bfR_0,
    \label{eq:ODER}
\end{align}
where $\gamma$ is chosen to be $\gamma_{x_0,x}$ here, and where the function $G$ is polynomial in the components of $\bfR$ with bounded coefficients provided that $H$ has components in $W^{1,\infty}$. Because $G$ is constructed such that $G\cdot\bfR=0$, as can be seen in \eqref{eq:gradR}, the solution to this system of equations, if it exists, has constant $\|\bfR\|^2$ norm, equal to $\|\bfR_0\|^2=3$. Therefore, the function $\bfR(x) \in \Rm^9$ remains in the compact set $\sqrt{3}\Sm^8$, over which $G$ is Lipschitz in the $\bfR$ variable. Thus by virtue of standard results for ordinary differential equations \cite{HS-74}, the solution to \eqref{eq:ODER} exists, is unique, and its existence can be extended up to $t=1$. Such a reconstruction procedure is stable in the sense of the following proposition.

\begin{proposition} [Local stability when $n=3$]\label{prop:stab3d}
    Let $\Omega\subset X$ and $H, \wtH:\Omega\to S_3(\Rm)$ be two data sets with components in $W^{1,\infty}(\Omega)$ that satisfy the condition
    \begin{align}
	\inf_{x\in\Omega}(D, \wtD) \ge c_0>0.
	\label{eq:condstab3d}
    \end{align}
    Then there exist two constants $C_0,C_1$ such that for every $x,y \in\Omega$, we have the estimate 
    \begin{align}
	\|\bfR - \widetilde\bfR\|(y) \le C_0 \|\bfR - \widetilde\bfR\|(x) + C_1 \|H-\wtH\|_{W^{1,\infty}(\Omega)}. 
	\label{eq:localstab}
    \end{align}
\end{proposition}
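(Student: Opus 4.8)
The plan is to prove \eqref{eq:localstab} by a Gronwall argument applied to the difference of the two solutions, integrated along the segment joining $x$ and $y$. Assume, as in the convex setting underlying Theorem \ref{thm:stab3d}, that $\gamma_{x,y}(t)=(1-t)x+ty$ lies in $\Omega$ for $t\in[0,1]$. By the construction leading to \eqref{eq:ODER}, both $\bfR$ and $\widetilde\bfR$ solve along this segment an ODE of the form $\frac{d}{dt}\bfR(\gamma_{x,y}(t)) = G(\gamma_{x,y}(t),\bfR(\gamma_{x,y}(t)))$ and $\frac{d}{dt}\widetilde\bfR(\gamma_{x,y}(t)) = \widetilde G(\gamma_{x,y}(t),\widetilde\bfR(\gamma_{x,y}(t)))$, where $G$ (resp.\ $\widetilde G$) is the cubic polynomial in the components of its second argument read off from \eqref{eq:gradR1}, with coefficients that are linear combinations of the data fields $V_{ij}$ (resp.\ $\wtV_{ij}$) from \eqref{eq:Vijs}. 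I introduce $w(t) := \bfR(\gamma_{x,y}(t)) - \widetilde\bfR(\gamma_{x,y}(t))$, so that $|w(0)| = \|\bfR-\widetilde\bfR\|(x)$ and $|w(1)| = \|\bfR-\widetilde\bfR\|(y)$.

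Differentiating and splitting off the coefficient mismatch gives
\[
\dot w = \big[G(\gamma_{x,y},\bfR) - G(\gamma_{x,y},\widetilde\bfR)\big] + \big[G(\gamma_{x,y},\widetilde\bfR) - \widetilde G(\gamma_{x,y},\widetilde\bfR)\big].
\]
For the first bracket I use that both solutions stay on the common compact sphere $\sqrt{3}\,\Sm^8$ (the norm conservation following from $G\cdot\bfR=0$, as noted just before the proposition and visible in \eqref{eq:gradR}), on which $G$ is Lipschitz in its second argument. The essential point is that the Lipschitz constant $L$ can be taken uniform along the segment and bounded in terms of the $L^\infty$-norms of the coefficients $Q_\alpha^k$; by \eqref{eq:Vijs} these are controlled by $\max_{i,j}\|V_{ij}\|_{L^\infty(\Omega)}$, which under condition \eqref{eq:condstab3d} is bounded in terms of $c_0$ and $\|H\|_{W^{1,\infty}(\Omega)}$. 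Hence $|G(\gamma_{x,y},\bfR)-G(\gamma_{x,y},\widetilde\bfR)| \le L\,|w(t)|$. For the second bracket, $G$ and $\widetilde G$ share the same polynomial structure and differ only through their coefficients; evaluating at the bounded argument $\widetilde\bfR\in\sqrt{3}\,\Sm^8$ and invoking the coefficient stability estimate \eqref{eq:stabvij} yields $|G(\gamma_{x,y},\widetilde\bfR)-\widetilde G(\gamma_{x,y},\widetilde\bfR)| \le C\max_{i,j}\|V_{ij}-\wtV_{ij}\|_{L^\infty(\Omega)} \le C'\|H-\wtH\|_{W^{1,\infty}(\Omega)}$.

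Combining the two estimates gives the scalar differential inequality $|\dot w(t)| \le L\,|w(t)| + C'\|H-\wtH\|_{W^{1,\infty}(\Omega)}$ on $[0,1]$, so Gronwall's lemma produces
\[
|w(1)| \le e^{L}\,|w(0)| + \frac{e^{L}-1}{L}\,C'\,\|H-\wtH\|_{W^{1,\infty}(\Omega)},
\]
which is exactly \eqref{eq:localstab} with $C_0=e^{L}$ and $C_1=\tfrac{e^{L}-1}{L}C'$. The step I expect to demand the most care is the uniform control of $L$ and of the coefficient-difference constant $C'$: one must check that the bounds on $Q_\alpha^k$ and on $V_{ij}-\wtV_{ij}$ are genuinely uniform over $\Omega$ and expressed solely through $c_0$ and the $W^{1,\infty}$ norms of the data (via Lemma \ref{lem:GS} and \eqref{eq:stabvij}), so that $C_0$ and $C_1$ are independent of the particular points $x,y$. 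The remaining ingredients — norm conservation keeping both solutions on the same sphere, the polynomial (hence locally Lipschitz) structure of $G$, and the Gronwall estimate itself — are routine.
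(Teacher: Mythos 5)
Your proof is correct and follows essentially the same route as the paper: the paper likewise derives a pointwise differential inequality for $\|\bfR-\widetilde\bfR\|$ along the segment $\gamma_{x,y}$, splitting the difference of the right-hand sides of \eqref{eq:gradR1} into a coefficient-mismatch term $(Q_\alpha^k-\wtQ_\alpha^k)\bfR^\alpha$ (your $G-\widetilde G$ bracket, controlled via \eqref{eq:stabvij}) and a term $\wtQ_\alpha^k(\bfR^\alpha-\widetilde\bfR^\alpha)$ (your Lipschitz bracket, controlled by the $\sqrt{3}\,\Sm^8$ confinement), before concluding with Gronwall's lemma. The only differences are cosmetic — the roles of $\bfR$ and $\widetilde\bfR$ in the splitting are exchanged, and your explicit constants $e^{L}$ and $\tfrac{e^{L}-1}{L}C'$ replace the paper's $e^{C_1\Delta(\Omega)\|\wtH\|_{1,\infty}}$ form in \eqref{eq:f_estim}.
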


\begin{proof}[Proof of proposition \ref{prop:stab3d}]
    Let two data sets $H,\wtH:\Omega\to S_3(\Rm)$ with components in $W^{1,\infty}(\Omega)$ correspond to conductivities $(\sigma,\tilde\sigma)$. Let $T, \wtT$ be built from $H, \wtH$ such that they satisfy conditions \eqref{eq:Tmatrix} and \eqref{eq:stabvij}. We then define $R = ST^T$ and $\wtR = \wtS \wtT^T$.  For the rest of the proof, we denote by $\bfR = (R_1^T, R_2^T, R_3^T)^T$ and $\widetilde\bfR$ similarly. Recall that we have equations of the type
    \begin{align*}
	\partial_k \bfR = \sum_{|\alpha|=3} Q_{\alpha}^k \bfR^\alpha,\quad 1\le k\le 3,
    \end{align*}
    where the $Q_{\alpha}^k$'s are linear combinations of $\{V_{ij}^l\}$ \eqref{eq:Vijs}. Given the stability condition \eqref{eq:stabvij}, we obtain estimates of the form 
    \begin{align}
	\max_{\alpha, k} \|\wtQ_\alpha^k\|_{L^\infty(\Omega)} \le C \|\wtH\|_{W^{1,\infty}(\Omega)} \qandq \max_{\alpha,k} \|Q_{\alpha}^k - \wtQ_\alpha^k\|_{L^\infty(\Omega)} \le C \|H-\wtH\|_{W^{1,\infty}(\Omega)}.
	\label{eq:stabQ}
    \end{align}
    We then write the pointwise relation
    \begin{align*}
	\partial_k \|\bfR-\widetilde\bfR\|^2 &= 2(\bfR-\widetilde\bfR)\cdot\partial_k (\bfR-\widetilde\bfR) = 2 (\bfR-\widetilde\bfR)\cdot \sum_{|\alpha|=3} [Q_\alpha^k \bfR^\alpha - \wtQ_\alpha^k \widetilde\bfR^\alpha] \\
	&= 2 (\bfR-\widetilde\bfR)\cdot \sum_{|\alpha|=3} [(Q_\alpha^k - \wtQ_\alpha^k) \bfR^\alpha + \wtQ_\alpha^k (\bfR^\alpha-\widetilde\bfR^\alpha)] .
    \end{align*}
    Recall that $\bfR$ and $\widetilde\bfR$ satisfy $\|\bfR\|^2= \|\widetilde\bfR\|^2=3$ on $\Omega$ so each of their components is bounded by $\sqrt{3}$. Therefore, for any $9$-index $\alpha$ with $|\alpha|=3$,
  {\small  \begin{align*}
	|\bfR^\alpha-\widetilde\bfR^\alpha| = \left|\prod_{l=1}^3 R_{i_l} - \prod_{l=1}^3 \wtR_{i_l}\right| &\le |(R_{i_1}-\wtR_{i_1})R_{i_2}R_{i_3}| + |\wtR_{i_1}(R_{i_2}-\wtR_{i_2})R_{i_3}| + |\wtR_{i_1}\wtR_{i_2}(R_{i_3}-\wtR_{i_3})| \\
	&\le 9\|\bfR-\widetilde\bfR\|.	
    \end{align*}}
    Therefore we have the estimate
    \begin{align*}
	\partial_k \|\bfR-\widetilde\bfR \|^2 \le 2 \|\bfR-\widetilde\bfR\| \sum_{|\alpha|=3} [\|Q_{\alpha}^k-\wtQ_{\alpha}^k\|_\infty + 9\| \wtQ_{\alpha}^k \|_\infty \|\bfR-\widetilde\bfR\|].
    \end{align*}
    Writing $\partial_k \|\bfR-\widetilde\bfR\|^2 = 2 \|\bfR-\widetilde\bfR\| \partial_k \|\bfR-\widetilde\bfR\|$ and using the estimates in \eqref{eq:stabQ}, we obtain for some constants $C_0$ and $C_1$ that
    \begin{align*}
	\partial_k \|\bfR-\widetilde\bfR\| \le C_0 \|H-\wtH\|_\infty + C_1 \|\wtH\|_\infty \|\bfR-\widetilde\bfR\|.
    \end{align*}
    Let $x,y\in\Omega$. Then applying Gronwall's lemma for the function $\|\bfR-\widetilde\bfR\|$ over the segment $[x,y]$ parameterized by $\gamma_{x,y}$, we get the estimate
    \begin{align}
	\|\bfR-\widetilde\bfR\|(y) \le ( \|\bfR-\widetilde\bfR\|(x) + \Delta(\Omega) C_0 \|H-\wtH\|_{1,\infty}) e^{C_1 \Delta(\Omega) \|\wtH\|_{1,\infty}}.
	\label{eq:f_estim}
    \end{align}
    This concludes the proof.
\end{proof}

\subsubsection{Gram-Schmidt decomposition}
\label{app:GS}
The Gram-Schmidt decomposition is defined as follows
\begin{align*}
    R_1 := S_1/|S_1| \qandq R_j = \left( S_j - \sum_{i=1}^{j-1} (S_j\cdot R_i)R_i \right) / \left|S_j - \sum_{i=1}^{j-1} (S_j\cdot R_i)R_i\right|, \quad j > 1,
\end{align*}
and builds a matrix $R$ from $S$ that is orthogonal by construction.  In the three-dimensional case, the transition matrix $T$ such that $R=S T^T$ is given by:
\begin{align}
    T = \{t_{ij}\}_{1\le i,j\le 3} &= \left[
    \begin{array}{ccc}
	H_{11}^{-\frac{1}{2}} & 0 & 0 \\
	-H_{12} H_{11}^{-\frac{1}{2}} d^{-1} & H_{11}^{\frac{1}{2}} d^{-1} & 0 \\
	(H_{12}H_{23}- H_{22}H_{13}) (dD)^{-1} & (H_{12}H_{13}-H_{11}H_{23}) (dD)^{-1} & d D^{-1}
    \end{array}
    \right], \label{eq:TGS} \\
    &\text{with}\quad d := (H_{11} H_{22} - H_{12}^2)^\frac{1}{2} \qandq D = (\det H)^{\frac{1}{2}}. \nonumber 
\end{align}
The vector fields $V_{ij}$ defined in \eqref{eq:Vijs} take the following expression
\begin{align}
    \{V_{ij} \}_{1\le i,j\le 3} = \left[
    \begin{array}{ccc}
	\nabla\log t_{11} & 0 & 0 \\
	\frac{t_{22}}{t_{11}}\nabla\frac{t_{21}}{t_{22}} & \nabla\log t_{22} & 0 \\
	\frac{t_{33}}{t_{11}} \nabla\frac{t_{31}}{t_{33}} - \frac{t_{21}t_{33}}{t_{11}t_{22}}\nabla\frac{t_{32}}{t_{33}} & \frac{t_{33}}{t_{22}}\nabla\frac{t_{32}}{t_{33}} & \nabla\log t_{33}
    \end{array}
    \right].
    \label{eq:VijGS}
\end{align}
When $n=2$, $T$ and $V_{ij}$ are given by the top-left $2\times 2$ blocs of \eqref{eq:TGS} and \eqref{eq:VijGS}, respectively.

\begin{proof}[Proof of lemma \ref{lem:GS}]
For simplicity we only prove the Lemma  in the case where the dimension is  two. The three
dimensions can be done similarly.   We first notice that $\|H^{-1}(x)\|_2  \le  \|H^{-1}\|_\infty$
over $\Omega$. Since $H$ is a symmetric matrix we have 
\begin{align*}
  \|H^{-1}(x)\|_2^{-1}\leq  H_{jj}(x), 
\end{align*}
and so $  \|H^{-1}\|_\infty^{-1} \leq  H_{jj}(x)$   over $\Omega$ for $1\le j\le 2$ . Using these estimates and
Remark~\ref{rem:Hmun} we easily derive the inequalities
\begin{align*}
\|t_{ij} -\tilde t_{ij} \|_\infty \leq 2 m_0^{3/2}M_0 
\|H-\widetilde H\|_{W^{1,\infty}(\Omega)},\\
\|\nabla t_{ij} -\nabla \tilde t_{ij} \|_\infty \leq4 m_0^{3/2}
\sum_{j=0}^4 M_0^j  
\|H-\widetilde H\|_{W^{1,\infty}(\Omega)},
\end{align*}   
for $1\le i,\, j\le 2$,  which proves the result. 
\end{proof}

%
%%
%%%%%%%%%%%%%%%%%%%%%%%
\section{Global reconstructions in 3D} \label{sec:global}

As we have seen in the past section, the three-dimensional approach is locally similar to the two-dimensional case, as far as the pointwise derivation of reconstruction formulas is concerned. The main difference is that, because the result \cite[Theorem 4]{AN-ARMA-01} does not hold when $n=3$, one must work with a covering of $X$ and build a global reconstruction procedure that patches the local reconstructions over the domains $\Omega_i$ together in a stable manner. %This is what we will do now. 

%
%%%%%%%%%%%%%%%%%%%%%
\subsection{Proof of lemma \ref{lem:setup}}
Let us first justify that the approach presented in the next section is valid in the sense that we can build illuminations such that a couple $(\O, j)$ satisfies condition \eqref{eq:condji}. 

\begin{proof}[Proof of lemma \ref{lem:setup}] A way to fulfill condition \eqref{eq:condji} is to construct solutions whose gradients are taylored to satisfy this condition up to terms that can be made negligible. We do this by using the Complex Geometrical Optic (CGO) solutions, a generalization of the harmonic complex plane waves of the form $e^{\bm\rho\cdot x}$ with $\bm\rho\in\Cm^n$ such that $\bm\rho\cdot\bm\rho=0$, so that $\Delta e^{\bm\rho\cdot x} = \bm\rho\cdot\bm\rho e^{\bm\rho\cdot x}=0$. These functions were first introduced in \cite{calderon80} in the context of linearized inverse problems, then extended in \cite{Syl-Uhl-87} in the context of non-linear inverse problems. The construction can be made for any $n\ge 2$. \\
    {\bf Construction of CGO's: } We first extend the diffusion equation $\nabla\cdot(\sigma(x)\nabla u) = 0$ to $\Rm^n$, where $\sigma(x)$ is extended in a continuous manner outside of $X$ and such that $\sigma\equiv 1$ outside of a large ball. Assuming that $\sigma|_{X}\in H^{\frac{n}{2}+3+\varepsilon}(X)$ for some $\varepsilon>0$ (this implies $\sigma\in\C^3(\overline X)$ by Sobolev imbedding), it is shown in \cite[Corollary 3.2]{BU-IP-10} following works in \cite{calderon80,Syl-Uhl-87}, that there exist complex-valued solutions of the above full-space diffusion equation of the form
    \begin{align}
	u_{\bm\rho} = \frac{1}{\sqrt{\sigma}} e^{\bm\rho\cdot x} (1 + \psi_{\bm\rho}), 
	\label{eq:ubmrho}
    \end{align}
    where $\bm\rho\in\Cm^n$ is a complex frequency satisfying $\bm\rho\cdot\bm\rho = 0$, which is equivalent to taking $\bm\rho = \rho (\bfk + i\bfk^\perp)$ for some $\bfk, \bfk^\perp \in \Sm^{n-1}$ such that $\bfk\cdot\bfk^\perp=0$ and $\rho = |\bm\rho|/\sqrt{2} >0$. Moreover, the remainder $\psi_{\bm\rho}$ satisfies the PDE
    \begin{align}
	\Delta \psi_{\bm\rho} + 2 \bm\rho\cdot\nabla\psi_{\bm\rho} = \frac{\Delta \sqrt{\sigma}}{\sqrt{\sigma}} (1+\psi_{\bm\rho}).
	\label{eq:psirho}
    \end{align}
    Energy estimates on \eqref{eq:psirho} done in \cite{BU-IP-10} use the regularity assumed on $\sigma$ to show that $\rho \psi_{\bm\rho}|_X = O(1)$ in $\C^1(\overline X)$. Using this estimate, computing the gradient of \eqref{eq:ubmrho} and rearranging term, we arrive at
    \begin{align*}
	\sqrt{\sigma}\nabla u_{\bm\rho} = e^{\bm\rho\cdot x} (\bm\rho + \bm\varphi_{\bm\rho}), \quad\text{with}\quad \bm\varphi_{\bm\rho} := \nabla\psi_{\bm\rho} + \psi_{\bm\rho} \bm\rho - (1+\psi_{\bm\rho}) \nabla\sqrt{\sigma}. 
    \end{align*}
    Because $\nabla \sqrt \sigma$ is bounded and $\rho{\psi_{\bm\rho}}|_{X} = O(1)$ in $\C^1(\overline X)$, the $\Cm^n$-valued function $\bm\varphi_{\bm\rho}$ satisfies $\sup_{\overline X} | \bm\varphi_{\bm\rho}| \leq C$ independent of $\brho$. Moreover, the constant $C$ is in fact independent of $\sigma$ provided that the norm of the latter is bounded by a uniform constant in  $H^{\frac n2+3+\eps}(X)$. \\
    {\bf Completion of the proof:} Note that $u_{\bm\rho}$ is complex-valued, thus its real and imaginary parts $u_{\bm\rho}^\Re$ and $u_{\bm\rho}^\Im$ provide two solutions of the diffusion equation, and $\sqrt\sigma \nabla u_{\bm\rho}^\Re$ and $\sqrt\sigma \nabla u_{\bm\rho}^\Im$ may serve as vectors $S_i$. More precisely, we have 
    \begin{align}
	\begin{split}
	    \sqrt\sigma\nabla u_{\bm\rho}^\Re  &= \rho e^{\rho\bfk\cdot x} \left( (\bfk+ \rho^{-1}\bm\varphi_{\bm\rho}^\Re )\cos(\rho\bfk^\perp\cdot x) - (\bfk^\perp+ \rho^{-1}\bm\varphi_{\bm\rho}^\Im ) \sin (\rho\bfk^\perp\cdot x) \right),  \\
	    \sqrt\sigma\nabla u_{\bm\rho}^\Im  &= \rho e^{\rho\bfk\cdot x} \left( (\bfk^\perp + \rho^{-1}\bm\varphi_{\bm\rho}^\Im) \cos(\rho\bfk^\perp\cdot x) + (\bfk + \rho^{-1} \bm\varphi_{\bm\rho}^\Re) \sin (\rho\bfk^\perp\cdot x) \right). 
	\end{split}
	\label{eq:urho}    
    \end{align}
    Let us now define $\bm\rho_1 = \rho (\bfe_2 + i\bfe_1)$, $\bm\rho_2 = \rho (\bfe_3 + i\bfe_1)$, and construct
    \begin{align*}
	(S_1, S_2, S_3, S_4) = \sqrt{\sigma} (\nabla u_{\bm\rho_1}^\Re, \nabla u_{\bm\rho_1}^\Im, \nabla u_{\bm\rho_2}^\Re, \nabla u_{\bm\rho_2}^\Im).
    \end{align*}
    Then, using \eqref{eq:urho}, we obtain that 
    \begin{align*}
	\det(S_1,S_2,S_3) &= \rho^3 e^{\rho(2x_2 + x_3)} \left( - \cos(\rho x_1) + f_1(x) \right), \\
	\det(S_1, S_2, S_4) &= \rho^3 e^{\rho(2x_2 + x_3)} \left( -\sin(\rho x_1) + f_2(x) \right),
    \end{align*}
    where $\lim_{\rho\to\infty} \sup_{\overline X} |f_1| = \lim_{\rho\to\infty} \sup_{\overline X} |f_2| = 0$. Letting $\rho$ so large that $\sup_{\overline X} (|f_1|,|f_2|)\le \frac{1}{4}$ and denoting $\gamma_0 := \min_{x\in\overline X} (\rho^3 e^{\rho(2x_2 + x_3)}) >0$, we have that $|\det(S_1,S_2,S_3)|\ge \frac{\gamma_0}{4}$ on sets of the form $X\cap \{ \rho x_1\in ]\frac{-\pi}{3}, \frac{\pi}{3}[ + m\pi\}$ and $|\det(S_1,S_2,S_4)|\ge \frac{\gamma_0}{4}$ on sets of the form $X\cap \{ \rho x_1\in ]\frac{\pi}{6}, \frac{5\pi}{6}[ + m\pi\}$, where $m$ is a signed integer. Since the previous sets are open and a finite number of them covers $X$ (because $X$ is bounded and $\rho$ is finite), we therefore have fulfilled the desired construction. Upon changing the sign of $S_3$ or $S_4$ on each of these sets if necessary, we can assume that the determinants are all positive.
    
    Let $\{g_i\}_{1\le i\le 4}$ be the traces of the above CGO solutions $(u_{\bm\rho_1}^\Re, u_{\bm\rho_1}^\Im, u_{\bm\rho_2}^\Re, u_{\bm\rho_1}^\Im)$. These illuminations generate solutions that satisfy the desired properties. Any boundary conditions $\tilde g_i$ in an open set sufficiently close to $g_i$ will ensure that the maximum of the determinants stay bounded from below by $c_0>0$:
\begin{align*}
	\max (\det (S_1,S_2,S_3), \det(S_1,S_2,S_4)))(x) \ge c_0 >0, \quad x\in \overline X.
    \end{align*}  
    This concludes the proof of the lemma.
\end{proof}

%\begin{remark}
%    In practice, the illuminations constructed in the above proof are not accessible to us since we do not know the functions $\psi_{\bm\rho}$, which depend on the unknown $\sigma$. However, if $\sigma$ is known at the boundary $\partial X$, one can choose illuminations of the form $\sigma^{-\frac{1}{2}} e^{\bm\rho\cdot x}|_{\partial X}$ and generate solutions that will approximate CGO's all the better that $|\bm\rho|$ will be large. Then by continuity arguments, since these illuminations will become close to the $g_i$'s constructed in the previous proof, they will still ensure a condition of the form 
%    \begin{align*}
%	\max (\det (S_1,S_2,S_3), \det(S_1,S_2,S_4)))(x) \ge c_0 >0, \quad x\in \overline X.
%    \end{align*}
%\end{remark}

%
%%%%%%%%%%%%%%%%%%%%
\subsection{Global reconstruction procedure}

Let us consider $m\ge 3$ solutions of \eqref{eq:conductivity} and assume that there exist an open covering $\O = \{\Omega_i\}_{1\le i\le N}$ with $X\subset\cup_{i=1}^N \Omega_i$, a constant $c_0>0$, and a function $\tau$ such that assumption \eqref{eq:condji} holds. Let us fix $x_0\in\Omega_{i_0}$ such that $\sigma(x_0)$ and $\{S_{\tau(i_0)_i}(x_0) \}_{1\le i\le 3}$ are known. We now assume that there exists an integer $K>0$ and two functions
\begin{align}
    \begin{split}
	Y &: X\ni x \mapsto Y(x) = (y_1(x) = x_0, y_2(x), \dots, y_{K}(x), y_{K+1}(x)=x)\in X^{K+1}, \\ 
	\psi &: X\times [1,K]\ni (x,i) \mapsto \psi(x,i) \in [1,N],
    \end{split}
    \label{eq:psi}
\end{align}
such that for every $x\in X$,
\begin{align}
    [x_0,x] = \cup_{i=1}^K [y_{i}(x),y_{i+1}(x)] \qandq [y_{i},y_{i+1}]\subset \Omega_{\psi(x,i)}, \quad 1\le i\le K.
    \label{eq:condpsi}
\end{align}
An example of such a setting is shown in Figure \ref{fig:setting}. 

\begin{figure}[htpb]
    \begin{center}
	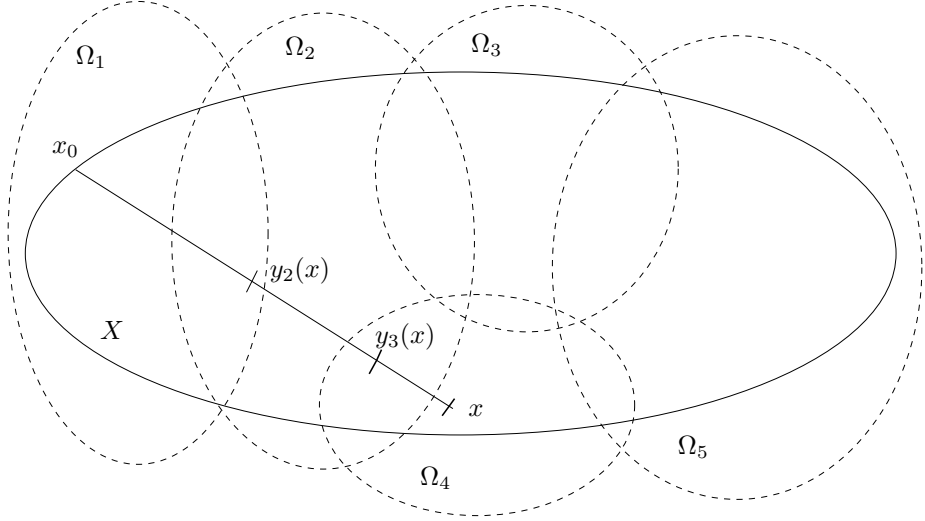
    \end{center}
    \caption{An example of setting for the reconstruction of $\sigma$ at some $x\in X$. One can see that $[x_0,y_2(x)]\subset\Omega_1$, $[y_2(x),y_3(x)]\subset\Omega_2$ and $[y_3(x),x]\subset\Omega_4$. In this case, we have $\psi(x,1) =1$, $\psi(x,2)=2$ and $\psi(x,3)=4$.}
    \label{fig:setting}
\end{figure}

From the $4$-uple $(\O, \tau, Y, \psi)$, we define the reconstruction procedure $\P(\O, \tau, Y, \psi)$ as follows. Let $x\in X$. For every $1\le k\le N$, let $S^{(k)} = [S_{\tau(\psi(x,k))_1}|S_{\tau(\psi(x,k))_2}|S_{\tau(\psi(x,k))_3}]$, $H^{(k)} = \{H_{ij}\}_{i,j\in \tau(\psi(x,k))}$, and $T^{(k)}$ the matrix of Gram-Schmidt coefficients of $H^{(k)}$. For further use, we also define 
\begin{align}
    H^{k-1,k}:= \{ H_{ij} (y_k)\}_{i\in \tau(\psi(x,k-1)), j\in \tau(\psi(x,k))}.
    \label{eq:Htrans}
\end{align}

The reconstruction procedure $\P(\O,\tau,Y,\psi)$ is summarized in algorithm \ref{algo}.
%%--------------------------------------------------------------------------------------------------------------------------------------
\begin{algorithm}
    \caption{Reconstruction procedure $\P(\O,\tau,Y,\psi)$}
\begin{algorithmic} \label{algo}
    \FOR{$x\in X$ and $1\le k\le K$}
    \STATE construct $T^{(k)}$ and the vector fields $V_{lm}^{(k)}$\eqref{eq:Vijs} from the measurements $\{H_{lm}\}_{l,m\in \tau(\psi(x,k))}$, and define $R^{(k)}= S^{(k)} T^{(k)T}$ by orthonormalizing $S^{(k)}$.
    \STATE 
    \IF{$k=1$} 
    \STATE $R^{(1)}(x_0)$ is known from the knowledge of $S^{(1)}(x_0)$.
    \ELSE 
    \STATE construct the value of $R^{(k)}(y_k)$ from the value $R^{(k-1)}(y_k)$ using the following formula
	\begin{align}
		    R^{(k)}(y_k(x)) = R^{(k-1)}(y_k(x))\cdot T^{(k-1)}(y_k(x))\cdot H^{k-1,k}\cdot T^{(k)}(y_k(x)).
		   \label{eq:Rtransfer}
		\end{align}
    \ENDIF
    \STATE For all $y\in [y_k(x), y_{k+1}(x)]$, compute $R^{(k)}(y)$ by integrating \eqref{eq:ODER} along $[y_k(x), y]$.
    \STATE Compute $\sigma(y_{k+1}(x))$ from $\sigma(y_{k}(x))$ and $R^{(k)}(y), y\in[y_k(x),y_{k+1}(x)]$ by integrating equation \eqref{eq:datatoF} along $[y_k(x),y_{k+1}(x)]$.
    \ENDFOR
\end{algorithmic}
\end{algorithm}
%%----------------------------------------------------------------------------------------------------------------------------------------------------------

%\begin{enumerate}
%    \item construct $T^{(k)}$ and the vector fields $V_{lm}^{(k)}$\eqref{eq:Vijs} from the measurements $\{H_{lm}\}_{l,m\in \tau(\psi(x,k))}$, and define $R^{(k)}= S^{(k)} T^{(k)T}$ by orthonormalizing $S^{(k)}$.
%    \item \begin{description}
%	    \item[if $k=1$:] $R^{(1)}(x_0)$ is known from the knowledge of $S^{(1)}(x_0)$.
%	    \item[if $k>1$:] construct the value of $R^{(k)}(y_k)$ from the value $R^{(k-1)}(y_k)$ using the following formula (justified after the algorithm)
%		\begin{align}
%		    R^{(k)}(y_k(x)) = R^{(k-1)}(y_k(x))\cdot T^{(k-1)}(y_k(x))\cdot H^{k-1,k}\cdot T^{(k)}(y_k(x)).
%		    \label{eq:Rtransfer}
%		\end{align}
%	\end{description}
%    \item For all $y\in [y_k(x), y_{k+1}(x)]$, compute $R^{(k)}(y)$ by integrating \eqref{eq:ODER} along $[y_k(x), y]$.
%    \item Compute $\sigma(y_{k+1}(x))$ from $\sigma(y_{k}(x))$ and $R^{(k)}(y), y\in[y_k(x),y_{k+1}(x)]$ by integrating equation \eqref{eq:datatoF} along $[y_k(x),y_{k+1}(x)]$.
%\end{enumerate}

Here and below, the superscript $^{-T}$ denotes the matrix inverse transpose. That the transfer equation \eqref{eq:Rtransfer} holds can be seen from the following facts:
\begin{itemize}
    \item one can compute $S^{(k-1)}(y_k)$ from $R^{(k-1)}(y_k)$ by writing 
	\begin{align*}
	    S^{(k-1)}(y_k) = R^{(k-1)}(y_k)\cdot (T^{(k-1)}(y_k))^{-T}.	    
	\end{align*}
    \item Then, from the equality $(S^{(k-1)}(y_k))^T S^{(k)}(y_k) = H^{k-1,k}$ with $H^{k-1,k}$ defined in \eqref{eq:Htrans}, and the fact that $S^{(k-1)}(y_k)$ is known, one obtains $S^{(k)}(y_k)$ by writing $S^{(k)}(y_k) = (S^{(k-1)}(y_k))^{-T} H^{k-1,k}$.
    \item Finally, one computes $R^{(k)}(y_k(x))$ by formula $R^{(k)}(y_k(x)) = S^{(k)}(y_k(x)) T^{(k)}(y_k(x))$.
\end{itemize} 
Combining the three formulas above gives \eqref{eq:Rtransfer}. 

In order for this procedure to yield globally stable reconstructions, we not only need local stability inside of each $\Omega_i\in\O$ (guaranteed by proposition \ref{prop:stab3d}), but also stability as one passes from $\Omega_{\psi(x,k-1)}$ to $\Omega_{\psi(x,k)}$ using formula \eqref{eq:Rtransfer}. This is done in the following lemma.
\begin{lemma} \label{lem:stab}
    There exist constants $C_2$ and $C_3$ such that for every $x\in X$ and every $2\le k\le K$, we have
    \begin{align}
	\| \bfR^{(k)}(y_k(x)) - \widetilde\bfR^{(k)}(y_k(x)) \| \le C_2 \| \bfR^{(k-1)}(y_k(x)) - \widetilde\bfR^{(k-1)}(y_k(x)) \| + C_3 \|\wtH-\wtH\|_{1,\infty}.
	\label{eq:stab_interstep}
    \end{align}
\end{lemma}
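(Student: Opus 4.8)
The plan is to exploit that the transfer formula \eqref{eq:Rtransfer} expresses $R^{(k)}(y_k)$, at the transition point $y_k = y_k(x)$, as a product of four matrices
\begin{align*}
    R^{(k)}(y_k) = R^{(k-1)}(y_k)\, T^{(k-1)}(y_k)\, H^{k-1,k}\, T^{(k)}(y_k),
\end{align*}
in which only the leftmost factor $R^{(k-1)}(y_k)$ carries information propagated from the previous chart, while the other three are built pointwise from the data at $y_k$. Since the $9$-vector $\bfR$ merely stacks the columns of $R$, one has $\|\bfR-\widetilde\bfR\| = \|R-\wtR\|_F$, so it suffices to estimate the Frobenius norm of the matrix difference. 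Writing the same identity for the tilde quantities and subtracting, I would use the telescoping decomposition of a product difference,
\begin{align*}
    ABCD - \widetilde A\widetilde B\widetilde C\widetilde D = (A-\widetilde A)BCD + \widetilde A(B-\widetilde B)CD + \widetilde A\widetilde B(C-\widetilde C)D + \widetilde A\widetilde B\widetilde C(D-\widetilde D),
\end{align*}
applied to $A = R^{(k-1)}(y_k)$, $B = T^{(k-1)}(y_k)$, $C = H^{k-1,k}$, $D = T^{(k)}(y_k)$.

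Bounding each of the four terms by submultiplicativity of the norm, I would then collect the contributions. The first term, controlled by $\|R^{(k-1)} - \wtR^{(k-1)}\|\,\|T^{(k-1)}\|\,\|H^{k-1,k}\|\,\|T^{(k)}\|$, produces exactly the desired $C_2\,\|\bfR^{(k-1)}(y_k) - \widetilde\bfR^{(k-1)}(y_k)\|$ term once the three factor norms are shown to be uniformly bounded. The remaining three terms each contain precisely one data-difference factor — $\|T^{(k-1)} - \wtT^{(k-1)}\|$, $\|H^{k-1,k} - \widetilde H^{k-1,k}\|$, or $\|T^{(k)} - \wtT^{(k)}\|$ — multiplied by uniformly bounded factors, and thus collapse into a single $C_3\,\|H-\wtH\|_{1,\infty}$ contribution.

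The uniform boundedness of the factors is where the hypotheses enter. As $R^{(k-1)}$ and $\wtR^{(k-1)}$ are rotations, their norms are bounded by a dimensional constant. Since $T^T T = H^{-1}$ by \eqref{eq:Tmatrix}, one has $\|T(y_k)\|_2 = \|H^{-1}(y_k)\|_2^{1/2} \le \|H^{-1}\|_\infty^{1/2}$, and assumption \eqref{eq:condji} together with Remark~\ref{rem:Hmun} bounds $\|H^{-1}\|_\infty$; crucially $y_k$ lies in the overlap $\Omega_{\psi(x,k-1)} \cap \Omega_{\psi(x,k)}$ (by \eqref{eq:condpsi}, as an endpoint of both $[y_{k-1},y_k]$ and $[y_k,y_{k+1}]$), so both $T^{(k-1)}(y_k)$ and $T^{(k)}(y_k)$ are well defined and controlled there. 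The block $H^{k-1,k}$ defined in \eqref{eq:Htrans} consists of entries of the measured matrix $H$, whence $\|H^{k-1,k}\| \le C\|H\|_\infty$ and $\|H^{k-1,k} - \widetilde H^{k-1,k}\| \le C\|H-\wtH\|_\infty \le C\|H-\wtH\|_{1,\infty}$. Finally the stability of the Gram--Schmidt transition matrices, $\|T^{(k)} - \wtT^{(k)}\| \le C\|H-\wtH\|_{1,\infty}$, is precisely Lemma~\ref{lem:GS}.

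I expect the only real obstacle to be bookkeeping rather than anything conceptual: one must verify that every constant produced is uniform in both $k$ and $x$. This uniformity holds because the cover $\O$ is finite and the lower bound $c_0$ in \eqref{eq:condji} is common to all the $\Omega_i$, so $\|H^{-1}\|_\infty$ is bounded uniformly on every chart; combined with the uniform control of $\|H\|_\infty$ from the regularity of the solution gradients on $\overline X$, this bounds all four product factors and all three difference factors by constants depending only on the data bounds and $c_0$, and not on $k$ or $x$. The constants $C_2$ and $C_3$ of the statement are then read off directly from the telescoping estimate.
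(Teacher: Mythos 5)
Your proposal is correct and follows essentially the same route as the paper's proof: the paper simply groups the three data-built factors into a single matrix $M^{k-1,k} := T^{(k-1)}(y_k)\, H^{k-1,k}\, T^{(k)}(y_k)$ and splits $R^{(k)}-\wtR^{(k)}$ into two terms, whereas you telescope over all four factors, which is the same estimate with finer bookkeeping. Your explicit justifications (Lemma \ref{lem:GS} for the stability of the $T$'s, Remark \ref{rem:Hmun} together with \eqref{eq:condji} for the uniform factor bounds, and the observation that $y_k$ lies in the overlap $\Omega_{\psi(x,k-1)}\cap\Omega_{\psi(x,k)}$ so both transition matrices are defined there) merely make precise details the paper leaves implicit in its estimates \eqref{eq:Mest}.
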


\begin{proof}
    The transfer equation \eqref{eq:Rtransfer} can be summarized as $R^{(k)}(y_k) = R^{(k-1)}(y_k) M^{k-1,k}$, where $M^{k-1,k} := T^{(k-1)}(y_k(x))\cdot H^{k-1,k}\cdot T^{(k)}(y_k(x))$, and similarly for $\wtR^{(k)}(y_k)$. Provided condition \eqref{eq:condji} holds, $T^{(k-1)}$ and $T^{(k)}$ are well-defined and we have estimates of the form 
    \begin{align}
	\|\wtM^{k-1,k}\| \le C \|\wtH\|_{1,\infty} \qandq \|M^{k-1,k}- \wtM^{k-1,k}\| \le C \|H-\wtH\|_{1,\infty},
	\label{eq:Mest}
    \end{align}
    where $\|M\|$ denotes any matrix norm. The lemma is thus proved if we write
    \begin{align*}
	R^{(k)}(y_k) - \wtR^{(k)}(y_k) = (R^{(k-1)}(y_k)- \wtR^{(k-1)}(y_k)) M^{k-1,k} + \wtR^{(k-1)}(y_k) (M^{k-1,k}-\wtM^{k-1,k} ),
    \end{align*}
    and bound $\|R^{(k)}(y_k) - \wtR^{(k)}(y_k)\|$ using triangle inequalities, estimates \eqref{eq:Mest} and the fact that $\|\wtR^{(k-1)}(y_k)\|\le 1$.     
\end{proof}

We now conclude the proof of theorem \ref{thm:stab3d}. 
\begin{proof}[Proof of theorem \ref{thm:stab3d}]
    Let us define the error function
    \begin{align*}
	\varepsilon(x):= \log\sigma(x)-\log\tilde\sigma(x).	
    \end{align*}
    Using equality \eqref{eq:datatoF}, we obtain that 
    \begin{align*}
	\frac{3}{2} \nabla\varepsilon(x) &= \nabla(\log D^{(K)}-\log\wtD^{(K)})  \\
	&\quad + ((V_{ij}^{(K)} +V_{ji}^{(K)})\cdot R_i^{(K)}) R_j^{(K)} - ((\wtV_{ij}^{(K)}+\wtV_{ji}^{(K)}) \cdot \wtR_i^{(K)}) \wtR_j^{(K)},
    \end{align*}
    where $i,j$ run over the set $\tau(\psi(x,K))$ and the superscript $(K)$ denotes the fact that we are at the $K$-th (i.e. last) step of the reconstruction. By writing 
    \begin{align*}
	((V_{ij} +V_{ji})\cdot R_i) R_j &- ((\wtV_{ij} +\wtV_{ji}) \cdot \wtR_i) \wtR_j = (( V_{ij}-\wtV_{ij} + V_{ji}-\wtV_{ji} )\cdot R_i)R_j + \\
	&+ ((\wtV_{ij}-\wtV_{ji})\cdot (R_i-\wtR_i))R_j + ((\wtV_{ij}-\wtV_{ji})\cdot \wtR_i)(R_j-\wtR_j),
    \end{align*}
    we obtain the following bounds% as follows
    \begin{align}
	\frac{3}{2} \|\nabla\varepsilon(x)\| \le C \|H-\wtH \|_{1,\infty} + 2\|\wtH\|_{1,\infty} \|\bfR^{(K)}(x)-\widetilde\bfR^{(K)}(x)\|.
	\label{eq:bound1}
    \end{align}
    We now apply alternatively $K$ times proposition \ref{prop:stab3d} and lemma \ref{lem:stab}. Define $C_4 = \max(C_0,C_2)$ and $C_5 = \max(C_1,C_3)$ where $C_0,C_1$ are given in proposition \ref{prop:stab3d} and $C_2,C_3$ are given in lemma \ref{lem:stab}. We obtain
      \begin{align}
	\|\bfR^{(K)}(x)-\widetilde\bfR^{(K)}(x)\| &\le C_4 \| \bfR^{(K)}(y_K(x)) - \widetilde\bfR^{(K)}(y_K(x))\| + C_5 \|H-\wtH\|_{1,\infty} \nonumber\\
	&\le C_4^2 \| \bfR^{(K-1)}(y_K(x)) - \widetilde\bfR^{(K-1)}(y_K(x)) \| + C_5 (1+C_4) \|H-\wtH\|_{1,\infty} \nonumber\\
	&\le \dots \nonumber\\
	&\le C_4^{2K} \| \bfR^{(1)}(y_1(x)) - \widetilde\bfR^{(1)}(y_1(x)) \| \nonumber \\&\qquad + C_5 (1+C_4\dots+ C_4^{2K-1}) \|H-\wtH\|_{1,\infty} \nonumber\\
	&= C_4^{2K} \| \bfR^{(1)}(x_0) - \widetilde\bfR^{(1)}(x_0) \| + C_5 \frac{C_4^{2K}-1}{C_4-1} \|H-\wtH\|_{1,\infty}. \label{eq:bound2}
    \end{align}
    Using the fact that  $\bfR^{(1)}(x_0)$ and $\widetilde\bfR^{(1)}(x_0)$ are built from $\{S_{\tau(i_0)_l}(x_0), \wtS_{\tau(i_0)_l}(x_0)\}_{1\le l\le 3}$, we obtain an inequality of the form
    \begin{align}
	\| \bfR^{(1)}(x_0) - \widetilde\bfR^{(1)}(x_0) \| \le C_7 \sum_{l=1}^3 \|S_{j_{i_0,l}}(x_0)- \wtS_{j_{i_0,l}}(x_0)\| + C_8 \|H-\wtH\|_{1,\infty}.
	\label{eq:bound3}
    \end{align}
    Summing up the three bounds \eqref{eq:bound1}, \eqref{eq:bound2} and \eqref{eq:bound3}, we obtain that
    \begin{align*}
	\|\nabla\varepsilon(x)\| \le C \sum_{l=1}^3 \|S_{\tau(i_0)_l}(x_0)- \wtS_{\tau(i_0)_l}(x_0)\| +  C' \|H-\wtH\|_{1,\infty}.
    \end{align*}
    Considering that $\varepsilon(x)$ is bounded by $|\varepsilon(x_0)| + \Delta(X) \|\nabla\varepsilon\|_\infty$ for any $x\in X$, inequality \eqref{eq:globstab3d} is proved. 
\end{proof}

\section*{Acknowledgment} The work of GB and FM was partially funded by the NSF under Grant DMS-0804696.

%\bibliography{../../bibliography}
%\bibliographystyle{siam}

\end{document}